\documentclass[11pt,a4paper]{article}
\title{\bf Weighted Littlewood--Paley inequalities for heat flows in $\RCD$ spaces}
\author{Huaiqian Li\footnote{Email: {\color{blue}huaiqianlee@gmail.com} }\vspace{3mm}\\
{\footnotesize Center for Applied Mathematics, Tianjin University,
 Tianjin 300072, P. R. China}
}
\date{}
\usepackage{amssymb,amsmath,amsfonts,amsthm,color,mathrsfs}

\setlength{\hoffset}{-0.4mm} \setlength{\voffset}{-0.4mm}
\setlength{\textwidth}{425pt} \setlength{\textheight}{678pt}
\setlength{\topmargin}{0pt} \setlength{\oddsidemargin}{14pt}
\setlength{\evensidemargin}{14pt} \setlength\arraycolsep{1pt}
\setlength{\headsep}{0mm} \setlength{\headheight}{0mm}

\def\R{\mathbb{R}}
\def\E{\mathbb{E}}
\def\P{\mathbb{P}}
\def\D{\mathbb{D}}

\def\L{\mathcal{L}}

\def\d{\textup{d}}
\def\D{\textup{D}}

\def\CD{\textup{CD}}

\def\MCP{\textup{MCP}}
\def\Ric{\textup{Ric}}

\def\supp{\textup{supp}}
\def\RCD{\textup{RCD}}
\def\vol{\textup{vol}}
\def\<{\langle}
\def\>{\rangle}
\def\Proof.{\noindent{\bf Proof. }}

\def\loc{\textup{loc}}

\def\newdot{{\kern.8pt\cdot\kern.8pt}}

\newtheorem{theorem}{Theorem}[section]
\newtheorem{lemma}[theorem]{Lemma}
\newtheorem{corollary}[theorem]{Corollary}
\newtheorem{proposition}[theorem]{Proposition}

\newtheorem{definition}[theorem]{Definition}
\theoremstyle{definition}\newtheorem{remark}[theorem]{Remark}

\begin{document}
\allowdisplaybreaks
\maketitle
\makeatletter 
\renewcommand\theequation{\thesection.\arabic{equation}}
\@addtoreset{equation}{section}
\makeatother 

\begin{abstract}
We establish inequalities on vertical Littlewood--Paley square functions for heat flows in the weighted $L^2$ space over metric measure spaces satisfying the $\RCD^\ast(0,N)$ condition with $N\in [1,\infty)$ and the maximum volume growth assumption. In the noncompact setting, the later assumption can be removed by showing that the volume of the ball growths at least linearly. The estimates are sharp on the growth of the 2-heat weight and the 2-Muckenhoupt weight considered. The $p$-Muckenhoupt weight and the  $p$-heat weight are also compared for all $p\in(1,\infty)$.
\end{abstract}

{\bf MSC 2010:} primary 60J60, 42A61; secondary 42B20, 35K08

{\bf Keywords:}  heat kernel; Littlewood--Paley square function; $\RCD$ space; weight

\section{Introduction}\hskip\parindent
The Littlewood--Paley inequality in $\R^n$ is originated from the $L^p$ boundedness  for all $1<p<\infty$ of the Littlwood--Paley $g$-function (which was introduced first by Littlewood and Paley \cite{LP1931} in $\R$ to study the dyadic decomposition of Fourier series); see \cite{St1958} or \cite[Chapter IV, Theorem 1]{St1970}. There are numerous studies and extensions on this result, and we are more concerned with the vertical (i.e., derivative with respect to the spacial variable) Littlewood--Paley square functions  in curved spaces. Let $M$ be a
complete   Riemannian manifold with  volume measure $\vol$, the non-negative Laplace--Beltrami operator $\Delta$, and the gradient operator $\nabla$. Denote  $(e^{-t\Delta})_{t\ge0}$ and $(e^{-t\sqrt{\Delta}})_{t\ge0}$ the heat flow and Poisson flow, respectively.  For every $f\in C_c^\infty(M)$, the vertical
Littlewood--Paley $\mathcal{H}$-function and $\mathcal{G}$-function are
defined respectively by
\begin{equation*}\label{cla-H}
\mathcal{H}(f)(x)=\Big(\int_0^\infty |\nabla e^{-t\Delta} f|^2(x)\,\d t\Big)^{1/2},
\end{equation*}
 and \begin{equation*}\label{cla-G}
\mathcal{G}(f)(x)=\Big(\int_0^\infty t|\nabla e^{-t\sqrt{\Delta}} f|^2(x)\,\d t\Big)^{1/2},
\end{equation*}
for every $x\in M$, where $|\cdot|$ is the norm in the tangent space induced by the Riemannian distance. The operator $\mathcal{H}$ (resp. $\mathcal{G}$) is said to be bounded in $L^p(M,\vol)$
for any $p\in (1,\infty)$,
if there exists a positive constant $C_p$ such that, for any $f\in C_c^\infty(M)$,
\begin{equation}\label{lp-bound}
\|\mathcal{H}(f)\|_{L^p(M,\vol)}\le C_p\|f\|_{L^p(M,\vol)}\quad (\mbox{resp. }\|\mathcal{G}(f)\|_{L^p(M,\vol)}\le C_p\|f\|_{L^p(M,\vol)}).
\end{equation}
For $1<p\leq2$, no additional assumptions on the complete and noncompact Riemannian manifold $M$ are needed for the boundedness of $\mathcal{H}$ and $\mathcal{G}$ in $L^p(M,\vol)$; see e.g. \cite[Theorem 1.2]{CDD}. However, for $2<p<\infty$, much stronger assumptions are need; for instance, see  \cite[Proposition 3.1]{CD2003} for the condition on the control of the gradient of the semigroup by the semigroup applied to the gradient, i.e., $|\nabla e^{-t\Delta}f|^2\leq Ce^{-t\Delta}|\nabla f|^2$ for any $f\in C_c^\infty(M)$. See also \cite{Stein70,Mey,Mey1981,Lou1987,LiH2017+} for other related studies.

In the other aspect, it is well known that many classical operators from harmonic analysis are bounded in the weighted $L^p$ space for all $1<p<\infty$, where the ``weight'' is referred to a $p$-Muckenhoupt weight or commonly called an $A_p$ weight, i.e., a non-negative locally integrable function satisfying the Muckenhoupt condition (see \cite{Muckenhoupt72} or Definition \ref{Muckenhoupt-weight} below). One of the  important questions is to find  sharp dependence on the growth of the $2$-Muckenhoupt weight $w$;
more precisely, given an operator $S: L^2_w(\R^n)\rightarrow L^2_w(\R^n)$, prove
\begin{equation}\label{T-weight}
\|S(f)\|_{L^2_w(\R^n)}\leq C(n,S)\phi\left(\|w\|_{A_2(\R^n)}\right)\|f\|_{L^2_w(\R^n)},
\end{equation}
where $\phi:\R_+\rightarrow\R_+$ is some function describing the optimal growth of $\|w\|_{A_2(\R^n)}$,  $C(n,S)$ denotes a constant, and $\|\cdot\|_{A_2(\R^n)}$ is defined in Definition \ref{Muckenhoupt-weight} below. The problem \eqref{T-weight} was first studied  by Buckley \cite{Buckley1993} and solved for the Hardy--Littlewood maximal operator. Then Petermichl and her coauthors proved \eqref{T-weight}  for the Beurling--Ahlfors operator, the Hilbert transform, the Riesz transform and the Haar shift; see \cite{PV2002,Petermichl2007,Petermichl2008,LPR2010}. Refer to \cite{CMP2010,CMP2012} for simplified proofs for Haar shifts.  Later, Hyt\"{o}nen \cite{Hyt2012} proved \eqref{T-weight} for the general Calder\'{o}n--Zygmund operator; see also \cite{HPTV2014} and \cite{Lerner2013} for  simplified proofs. In a very recent work \cite{BO2016}, by establishing  sharp weighted $L^2$ martingale inequalities, Ba\~{n}uelos and Osekowski proved \eqref{T-weight} for the dyadic square function, as well as the weighted version of \eqref{lp-bound}.

Motivated by \cite{BO2016}, we are going to establish weighted $L^2$ versions of vertical Littlewood--Paley square functions corresponding to the heat flow in the $\RCD$ space, which is presented in Section 3 below. In Section 2, we recall the definition of $\RCD$ spaces and some known results. In Section 4, we compare the $p$-heat weight and the $p$-Muckenhoupt weight, which is motivated by \cite[Section 3]{PV2002}. In Section 5, we study the smooth Riemannian manifold setting as a typical example, and finally we remark that similar results should be established on a large class of sub-Riemannian manifolds.

\section{Preliminaries}\hskip\parindent
In this section, we present some notions and know results; refer to \cite{AmbrosioGigliSavare2011b,AmbrosioGigliSavare2012,eks2013,gi2012} for more details.

\subsection{$\RCD$ spaces}\hskip\parindent
Throughout this work, $(M,d)$ will always denote a complete and separable metric space. Let $C([0, 1],M)$ be the Banach space of continuous curves from $[0, 1]$ to $M$ equipped with the supremum norm. For every $t\in [0, 1]$, recall that the evaluation map $e_t :C([0, 1],M) \rightarrow M$ is defined by
$$e_t(\gamma)=\gamma_t,\quad\mbox{for any }\gamma\in C([0, 1],M).$$

Let $q\in [1,\infty]$. A curve $\gamma: [0,1] \rightarrow M$ is said to be absolutely continuous, denoted by $\gamma\in AC_q([0,1],M)$, if there exists a function $h\in L^q([0,1])$ such that,
\begin{eqnarray}\label{ac}
d(\gamma_s,\gamma_t)\leq \int_s^t h(r)\,\d r,\quad\mbox{for any }0\leq s<t\leq 1.
\end{eqnarray}
If $\gamma\in AC_q([0,1];M)$, then it can be proved that the metric slope
$$\lim_{\epsilon\rightarrow 0}\frac{d(\gamma_{r+\epsilon},\gamma_r)}{|\epsilon|},$$
denoted by $|\dot{\gamma}_r|$, exists for  a.e. $r\in [0,1]$, belongs to $L^q([0,1])$,
and it is the minimal function $h$ such that \eqref{ac} holds (see Theorem 1.1.2 in \cite{AmbrosioGigliSavare2005} for the proof).
For every $\gamma\in C([0,1],M)$, we use the notation $\int_0^1 |\dot{\gamma}_r|^q\,\d r$, which may be $+\infty$ if $\gamma$ is
not absolutely continuous.

Endow $(M,d)$ with a non-negative Radon  measure $\mu$ with full topology support. We call the triple $(M,d,\mu)$ a metric measure space.

We recall first the notions of test plan and Sobolev class; see \cite{AmbrosioGigliSavare2014,gi2012} for more details.
\begin{definition}
A probability measure $\pi$ on $C([0, 1],M)$ is called a test plan if, there exists a positive constant $C$ such that
$$(e_t)_\sharp{ \pi}\le C\mu,\quad\mbox{for any }t\in [0,1],$$
and
$$\int \int_0^1|\dot{\gamma}_t|^2\,\d t\,\d\pi(\gamma)<\infty,$$
where $(e_t)_\sharp{ \pi}(E):=\pi(e_t^{-1}(E))$ for every Borel subset $E$ of $M$.
  \end{definition}

\begin{definition} \label{sobolev}
The Sobolev class $S^2(M):=S^{2}(M,d,\mu)$  is the space of all Borel functions $h: M\rightarrow \R$, for which there exists a non-negative function $f\in L^2(M)$ such that, for each test plan $\pi$, it holds
\begin{equation}\label{curve-sobolev}
\int |h(\gamma_1)-h(\gamma_0)|\,\d\pi(\gamma)\leq \int \int_0^1 f(\gamma_t)|\dot{\gamma}_t|\,\d t\,\d\pi(\gamma).
\end{equation}
\end{definition}
It turns out that for each  $h\in S^2(M)$, there exists a unique minimal function $f$
in the $\mu$-a.e. sense such that \eqref{curve-sobolev} holds. The minimal function $f$ is represented by $|\nabla  h|_w$
and called the minimal weak upper gradient of $h$. See e.g. \cite{AmbrosioGigliSavare2014}.

The Sobolev space $W^{1,2}(M):=W^{1,2}(M,d,\mu)$ is defined as $S^2(M)\cap L^2(M)$, which is a Banach space with the norm
$$\|f\|_{W^{1,2}(M)}:=\Big(\|f\|^2_{L^2(M)}+\| |\nabla  f|_w\|_{L^2(M)}^2\Big)^{1/2},$$
but, in general, not a Hilbert space.

Now we recall the definition of the so-called reduced curvature-dimension condition $\CD^*(K,N)$, which is first introduced in \cite{BacherandSturm2010} and it is a modification of the curvature-dimension condition $\CD(K,N)$ introduced independently by Lott--Villani \cite{LV2009} and Sturm \cite{Sturm2006a,Sturm2006b}. In particular, $\CD(0,N)$ and $\CD^\ast(0,N)$ coincide with each other.

Let $K,N\in\R$ with $N\geq1$. For every $(t,\theta)\in [0,1]\times [0,\infty)$, define
\begin{equation*}\label{distortion2}
\sigma_{K,N}^{(t)}(\theta)=
\begin{cases}
 \frac{\sinh\big(t\theta\sqrt{-K/N}\big)}{\sinh\big(\theta\sqrt{-K/N}\big)} ,\quad &{\mbox{if }K\theta^2< 0\mbox{ and }N>1},\\
t,\quad &{\hbox{if }K\theta^2=0,\mbox{ or if }K\theta^2<0\mbox{ and }N=1},\\
 \frac{\sin\big(t\theta\sqrt{K/N}\big)}{\sin\big(\theta\sqrt{K/N}\big)} ,\quad &{\hbox{if }0<K\theta^2<N\pi^2},\\
+\infty,\quad &{\hbox{if }K\theta^2\geq N\pi^2}.
\end{cases}
\end{equation*}
Let $\mathcal{P}_b(M)$ denote the class of Borel probability measures on $(M,d)$ with bounded support. Given two metric measure spaces $(X_1,d_1,\nu_1)$ and $(X_2,d_2,\nu_2)$, we say that a measure $\gamma$ on the product space $X_1\times X_2$ is a coupling of $\nu_1$ and $\nu_2$ if $$\gamma(A\times X_2)=\nu_1(A),\quad\gamma(X_1\times B)=\nu_2(B),$$
for all Borel subsets $A$ of $X_1$ and $B$ of $X_2$.
\begin{definition}\label{CD-star}
Let $K\in \R$ and $N\in [1,\infty)$. We say that the metric measure space $(M,d,\mu)$ is a $\CD^\ast(K,N)$ space, if for every pair $\eta_0,\eta_1\in \mathcal{P}_b(M)$ with $\eta_i=\rho_i\mu$, $i=0,1$, there exists an optimal coupling $\pi$ of $\eta_0$ and $\eta_1$ such that
\begin{eqnarray*}\label{CD-star-1}
&&\int_X \rho_t^{1-\frac{1}{N'}}\,\d\mu\cr
 &\geq& \int\left[\sigma_{K,N'}^{(1-t)}(d(\gamma_0,\gamma_1))\rho_0^{-\frac{1}{N'}}(\gamma_0) + \sigma_{K,N'}^{(t)}(d(\gamma_0,\gamma_1))\rho_1^{-\frac{1}{N'}}(\gamma_1) \right]\,\d\pi(\gamma),
\end{eqnarray*}
for all $t\in [0,1]$ and all $N'\geq N$, where $\rho_t$ denotes the Radon--Nikodym derivative $\frac{\d (e_t)_{\#}\pi}{\d\mu}$ for every $t\in [0,1]$.
\end{definition}

In order to rule out Finsler structures, the Riemannian curvature-dimension condition ($\RCD$ for short) is introduced in \cite{AmbrosioGigliSavare2011b,agmr2015} (with $N=\infty$) and then in \cite{gi2012,eks2013} (including $N<\infty$), which is more restrictive than the reduced curvature-dimension condition  $\CD^*(K,N)$ by requiring additionally the Banach space $W^{1,2}(M)$ to be a Hilbert space.

\begin{definition}
Let $K\in \R$ and $N\in [1,\infty)$. We say that a metric measure space $(M,d,\mu)$ is an $\RCD^*(K,N)$ space if it is a $\CD^*(K,N)$ space and $W^{1,2}(M)$ is a Hilbert space.
\end{definition}
Typical examples of $\RCD^\ast(K,N)$ spaces are complete weighted Riemannian manifolds satisfying the Bakry--Emery curvature-dimension condition (see \cite{BakryEmery1985}), as well as their limit spaces in the measured Gromov--Hausdorff sense (see \cite{Sturm2006a,Sturm2006b,LV2009}), Alexandorv spaces (with curvature bounded from below) (see \cite{Petr2011,ZZ2011}), and so on.

We recall the volume comparison property (see  \cite[Theorem 6.2]{BacherandSturm2010}) which will be applied to the proof of main results below.
\begin{proposition}\label{vol-comparison}
Let $(M,d,\mu)$ be  a $\CD^*(0,N)$ space with $N\in [1,\infty)$. For any $x\in M$ and any $r,R\in(0,\infty)$ with $R\geq r$,
$$\frac{\mu\big(B(x,R)\big)}{R^N}\leq \frac{\mu\big(B(x,r)\big)}{r^N}.$$
\end{proposition}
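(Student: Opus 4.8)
The plan is to deduce the proposition from the equivalent geodesic volume-contraction estimate
$$\mu\big(B(x,tR)\big)\ge t^N\,\mu\big(B(x,R)\big),\qquad t\in(0,1),$$
which yields the claim upon setting $t=r/R\le 1$ and rearranging. The route is the classical Bishop--Gromov argument recast in the optimal-transport language underlying the $\CD^*(0,N)$ condition.

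First I would fix the center $x$ and, writing $\nu_0:=\mu(B(x,R))^{-1}\,\mu|_{B(x,R)}$ for the normalized uniform probability measure on $B(x,R)$, consider the $W_2$-geodesic $(\nu_s)_{s\in[0,1]}$ joining $\nu_0$ to the Dirac mass $\nu_1:=\delta_x$. This geodesic is induced by sliding each point $y$ along a minimizing geodesic toward $x$: at time $s$ the mass originally at $y$ sits at distance $(1-s)\,d(x,y)$ from $x$, so that $\supp\nu_s\subseteq B\big(x,(1-s)R\big)$ and $\nu_s$ is absolutely continuous with density $\rho_s$ for $s\in[0,1)$.

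Second, I would apply the defining convexity inequality of $\CD^*(0,N)$ to the R\'enyi entropy functional $S_N(\rho\mu\,|\,\mu):=-\int\rho^{1-1/N}\,\d\mu$ along $(\nu_s)$. The crucial simplification is that for $K=0$ the reduced distortion coefficients degenerate to $\sigma^{(s)}_{0,N}(\theta)=s$, independently of $\theta$; disintegrating the associated optimal dynamical plan and using that minimizing geodesics to $x$ are a.e.\ unique then converts the integrated entropy bound into a pointwise Jacobian estimate for the contraction map $y\mapsto\gamma_{1-s}$. Integrating this estimate over $B(x,R)$ gives $\mu\big(B(x,(1-s)R)\big)\ge(1-s)^N\,\mu\big(B(x,R)\big)$, and relabeling $1-s$ as $t$ produces the contraction estimate above.

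The main obstacle is precisely the localization in the second step: passing from the global entropy-convexity inequality, which controls only an integral functional, to the pointwise density lower bound requires a careful disintegration of the optimal plan together with the a.e.\ uniqueness of geodesics emanating from a fixed point. The Dirac endpoint $\delta_x$ is convenient because it forces all transport geodesics to focus at $x$, but it also means one works at the boundary of absolute continuity as $s\to1$, so the density bounds must be established uniformly on $[0,1)$ and then passed to the limit.
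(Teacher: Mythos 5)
First, a framing remark: the paper does not prove Proposition \ref{vol-comparison} at all --- it is quoted verbatim from \cite[Theorem 6.2]{BacherandSturm2010} --- so your attempt must be measured against the Sturm/Bacher--Sturm argument in the literature. Your overall strategy (transport the normalized uniform measure on $B(x,R)$ toward the center, use the $K=0$ degeneration $\sigma^{(s)}_{0,N}(\theta)=s$ of the distortion coefficients, and exploit that the interpolating measures are supported in $B(x,(1-s)R)$) is indeed the standard route. The genuine gap is your middle step: you propose to convert the integrated entropy inequality into a \emph{pointwise} Jacobian estimate by ``disintegrating the optimal dynamical plan and using that minimizing geodesics to $x$ are a.e.\ unique.'' In a bare $\CD^\ast(0,N)$ space that uniqueness is false in general: such spaces may branch, and geodesics may be massively non-unique (e.g.\ $\R^n$ with a non-strictly convex norm and Lebesgue measure satisfies $\CD(0,n)=\CD^\ast(0,n)$; ruling out exactly these Finsler examples is why $\RCD$ was introduced, as the paper itself recalls --- and the proposition is stated for $\CD^\ast$, not $\RCD^\ast$). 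Moreover, passing from the entropy bound to a pointwise density bound is essentially the implication $\CD\Rightarrow\MCP$, which is only known under (essential) non-branching hypotheses. So the step you yourself identify as ``the main obstacle'' is resolved in your sketch by appealing to a premise that fails in the stated generality.

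The irony is that no pointwise estimate is needed; the integrated inequality suffices. Since Dirac masses are not admissible endpoints in the Sturm/Bacher--Sturm definition, approximate the target by $\nu_1^\epsilon=\mu(B(x,\epsilon))^{-1}\mu|_{B(x,\epsilon)}$ (approximating the endpoint, rather than working ``uniformly on $[0,1)$'' along a geodesic whose existence and convexity property the definition does not directly provide for a singular endpoint). Writing $\nu_s=\rho_s\mu$ for the geodesic furnished by $\CD^\ast(0,N)$, the convexity inequality with $N'=N$ gives
$$\int \rho_s^{1-1/N}\,\d\mu\;\geq\;(1-s)\,\mu\big(B(x,R)\big)^{1/N}+s\,\mu\big(B(x,\epsilon)\big)^{1/N}\;\geq\;(1-s)\,\mu\big(B(x,R)\big)^{1/N},$$
while H\"older's inequality together with $\supp\nu_s\subseteq B\big(x,(1-s)R+2\epsilon\big)$ (every transport geodesic ends within distance $\epsilon$ of $x$) gives
$$\int \rho_s^{1-1/N}\,\d\mu\;\leq\;\Big(\int\rho_s\,\d\mu\Big)^{1-1/N}\mu\big(B(x,(1-s)R+2\epsilon)\big)^{1/N}=\mu\big(B(x,(1-s)R+2\epsilon)\big)^{1/N}.$$
Letting $\epsilon\downarrow0$ and choosing $1-s=r/R$ yields $\mu(B(x,r))\geq (r/R)^N\mu(B(x,R))$, which is the claim. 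This purely integral argument --- the one underlying \cite{Sturm2006b} and \cite{BacherandSturm2010} --- needs no disintegration, no geodesic uniqueness, and no Jacobian bounds; substituting it for your second step turns your outline into a correct proof.
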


\subsection{Martingale inequalities}\hskip\parindent
Let $(\Omega,\mathcal{F},\mathbb{P})$ be a complete probability space equipped with a filtration $(\mathcal{F}_t)_{t\geq0}$, a nondecreasing right continuous family of sub-$\sigma$-fields of $\mathcal{F}$ such that $\mathcal{F}_0$ contains all the events with probability 0. Fix $T\in(0,\infty]$. Let $X=(X_t)_{t\geq0}$ be an adapted and uniformly integrable martingale having continuous path, and $\langle X\rangle=(\langle X\rangle_t)_{ t\geq0}$ be the quadratic variation process. Let  $Y=(Y_t)_{t\geq0}$ be a non-negative, uniformly integrable martingale with continuous path such that $Y_0=\mathbb{E}(Y_T)$. For $1<p<\infty$, following Izumisawa and Kazamaki in \cite{IzumisawaKazamaki1977}, we say that $Y$ satisfies the Muckenhoupt condition $A_p^{mart}$ if
$$\|Y\|_{A_p^{mart}}:=\sup_{0\leq t\leq T}\Big\|\Big(\mathbb{E}\Big[\Big(\frac{Y_t}{Y_T}\Big)^{1/(p-1)}\Big|\mathcal{F}_t\Big]\Big)^{p-1}\Big\|_{L^\infty(\mathbb{P})}<\infty,$$
where $Y_t=\mathbb{E}(Y_T|\mathcal{F}_t)$. The process $Y$ gives rise to a probability measure $\mathbb{Q}$ defined by
$$\mathbb{Q}(A)=\int_A Y_T\,\d\mathbb{P},\quad A\in\mathcal{F}_T,$$
and hence, it can be regarded as a weight.

Now we adapt results from \cite[Section 4]{BO2016} (see \cite[Theorem 1]{DomPet2017} for a more general result on Hilbert space valued differentially subordinate martingales, as well as \cite{DomPet2016} for sharp weighted $L^p$ estimate on the maximal
function of adapted uniformly integrable c\`{a}dl\`{a}g Hilbert space valued martingales) in the next theorem, which is one of the key tools to establish our main results.
\begin{theorem}\label{L2inequality}
Fix $T\in (0,\infty]$. Let $X=(X_t)_{t\geq0}$ be an adapted, real valued and uniformly integrable martingale with continuous path, and $Y=(Y_t)_{ t\geq0}$ be a non-negative, uniformly integrable martingale with continuous path. Suppose that $X$ is bounded in $L^2(\mathbb{Q})$ and $Y$ satisfies the Muckenhoupt condition $A_2^{mart}$. Then
\begin{equation}\label{L2-1}
\|X_T\|_{L^2(\mathbb{Q})}\leq \big(80\|Y\|_{A_2^{mart}}\big)^{1/2}\|\langle X\rangle_T^{1/2}\|_{L^2(\mathbb{Q})},
\end{equation}
and
\begin{equation}\label{L2-2}
\|\langle X\rangle_T^{1/2}\|_{L^2(\mathbb{Q})}\leq \inf_{1<r<2}\Big(\frac{r}{2-r}\|Y\|_{A_r^{mart}}\Big)^{1/2}\|X_T\|_{L^2(\mathbb{Q})}.
\end{equation}
Moreover,
\begin{equation}\label{L2-3}
\|\langle X\rangle_T^{1/2}\|_{L^2(\mathbb{Q})}\leq 2^{7/4}\|Y\|_{A_2^{mart}}\|X_T\|_{L^2(\mathbb{Q})}.
\end{equation}
\end{theorem}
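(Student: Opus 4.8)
The plan is to reduce all three inequalities to sharp weighted $L^2$ martingale estimates of Bellman type. First I would pass from the measure $\mathbb{Q}$ to the underlying $\mathbb{P}$: since $\mathbb{Q}(A)=\int_A Y_T\,\d\mathbb{P}$, one has $\|X_T\|_{L^2(\mathbb{Q})}^2=\mathbb{E}[Y_TX_T^2]$ and $\|\langle X\rangle_T^{1/2}\|_{L^2(\mathbb{Q})}^2=\mathbb{E}[Y_T\langle X\rangle_T]$, so that \eqref{L2-1}--\eqref{L2-3} become bilinear estimates weighted by $Y_T$. Next I would encode the Muckenhoupt condition through the companion martingale $W_t=\mathbb{E}[Y_T^{-1}\mid\mathcal{F}_t]$; for $p=2$ the assumption $\|Y\|_{A_2^{mart}}<\infty$ is exactly the pointwise bound $Y_tW_t\le \|Y\|_{A_2^{mart}}$ for all $t$, which is the data on which a Bellman function will be built.

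For \eqref{L2-1} and \eqref{L2-2} I would invoke the Ba\~nuelos--Osekowski machinery. The idea is to construct a function $B$ of the variables $(x,y,w)$ (tracking $X_t$, $Y_t$, $W_t$) on the domain $\{y,w>0,\ yw\le \|Y\|_{A_2^{mart}}\}$ satisfying a size condition that dominates $yx^2$ from below and a concavity/supersolution condition ensuring that, along the paths, the second-order (drift) part of It\^o's formula applied to $B(X_t,Y_t,W_t)$ is controlled by a multiple of $Y_t\,\d\langle X\rangle_t$. Integrating It\^o's formula, discarding the local martingale part, and using the initial and terminal bounds on $B$ then yields $\mathbb{E}[Y_TX_T^2]\le 80\,\|Y\|_{A_2^{mart}}\,\mathbb{E}[Y_T\langle X\rangle_T]$, i.e. \eqref{L2-1}; the reverse estimate \eqref{L2-2} follows from an analogous special function adapted to the $A_r^{mart}$ data, whose construction produces precisely the factor $r/(2-r)$.

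Finally, \eqref{L2-3} I would deduce from \eqref{L2-2} by a self-improvement (reverse H\"older) property of the martingale $A_2$ class. The point is that $A_2^{mart}$ weights lie in $A_r^{mart}$ for $r$ slightly below $2$ with a controlled constant; concretely one expects $\|Y\|_{A_r^{mart}}\le 2\,\|Y\|_{A_2^{mart}}$ once $2-r$ is comparable to $\|Y\|_{A_2^{mart}}^{-1}$. Choosing $r=2-2^{-3/2}\|Y\|_{A_2^{mart}}^{-1}$ makes $r/(2-r)\approx 2^{5/2}\|Y\|_{A_2^{mart}}$, so that $\big(\tfrac{r}{2-r}\|Y\|_{A_r^{mart}}\big)^{1/2}\le 2^{7/4}\|Y\|_{A_2^{mart}}$, which is \eqref{L2-3}.

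The hard part will be the construction and verification of the Bellman functions giving the sharp numerical constants $80$ and $r/(2-r)$: one must check the concavity/supersolution inequality on the whole admissible domain together with the matching boundary behaviour, and separately justify that the local martingale term in It\^o's formula has zero expectation, by localization combined with the uniform integrability of $X$ and $Y$ and the $A_2^{mart}$ bound. The reverse-H\"older step also hinges on the sharp martingale self-improvement constant, which is exactly where the exponent $2^{7/4}$ is pinned down.
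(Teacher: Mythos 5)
Your proposal cannot be compared against an internal argument, because the paper contains none: Theorem \ref{L2inequality} is explicitly \emph{adapted} from Ba\~nuelos--Osekowski \cite{BO2016} (with \cite{DomPet2017} cited for extensions), and the constants $80$, $r/(2-r)$, $2^{7/4}$ are simply imported. What you have written is, in effect, a faithful roadmap of the proof in that reference: the change-of-measure identities $\|X_T\|_{L^2(\mathbb{Q})}^2=\mathbb{E}[Y_TX_T^2]$ and $\|\langle X\rangle_T^{1/2}\|_{L^2(\mathbb{Q})}^2=\mathbb{E}[Y_T\langle X\rangle_T]$; the encoding of the weight condition via $W_t=\mathbb{E}[Y_T^{-1}\mid\mathcal{F}_t]$, where indeed $\|Y\|_{A_2^{mart}}<\infty$ is exactly $\sup_t\|Y_tW_t\|_{L^\infty(\mathbb{P})}<\infty$; the Bellman/It\^o scheme for \eqref{L2-1} and \eqref{L2-2}; and the passage from \eqref{L2-2} to \eqref{L2-3} by self-improvement of the martingale $A_2$ class. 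Your arithmetic in the last step is also internally consistent: with $2-r=2^{-3/2}\|Y\|_{A_2^{mart}}^{-1}$ (admissible since $\|Y\|_{A_2^{mart}}\geq1$ by Jensen) and the claimed bound $\|Y\|_{A_r^{mart}}\leq 2\|Y\|_{A_2^{mart}}$, one gets $\frac{r}{2-r}\|Y\|_{A_r^{mart}}\leq 2^{5/2}\|Y\|_{A_2^{mart}}\cdot 2\|Y\|_{A_2^{mart}}=2^{7/2}\|Y\|_{A_2^{mart}}^2$, whence $2^{7/4}$ after taking square roots.

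However, as a proof the proposal has genuine gaps, and they sit exactly where the content of the theorem lies. First, the Bellman functions producing $80$ and $r/(2-r)$ are never constructed; these constants are not consequences of soft concavity heuristics but of explicit special functions whose supersolution property must be verified on the whole domain $\{y,w>0,\ yw\leq\|Y\|_{A_2^{mart}}\}$, together with the matching size conditions --- this verification is the bulk of \cite{BO2016} and cannot be treated as a black box if you are writing a proof. Second, the self-improvement step is asserted (``one expects $\|Y\|_{A_r^{mart}}\leq2\|Y\|_{A_2^{mart}}$''), but this is a theorem, not a formality: for general martingale weights the classes $A_p^{mart}$ are \emph{not} open (the pathologies in the discontinuous setting go back to the literature following \cite{IzumisawaKazamaki1977}), continuity of paths is essential, and the quantitative statement with constant $2$ at the scale $2-r\sim\|Y\|_{A_2^{mart}}^{-1}$ is precisely the lemma in \cite{BO2016} that pins down the exponent $2^{7/4}$; without proving it, \eqref{L2-3} is not established. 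Third, the localization argument that discards the local-martingale term in It\^o's formula --- using uniform integrability of $X$ and $Y$ and the $A_2^{mart}$ bound --- is flagged but not carried out, and for \eqref{L2-1} with $T=\infty$ this is where integrability is genuinely needed. In short: correct strategy, matching the cited source, but all three numerical claims rest on constructions and lemmas that the proposal defers.
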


\section{Weighted $L^2$ inequalities in $\RCD$ spaces}\hskip\parindent
In this section, let $(M,d,\mu)$ be an $\RCD^\ast(K,N)$ space with $K\in\R$ and $N\in [1,\infty)$. Then $(M,d)$ is a locally compact length space, and indeed, a geodesic and proper space (i.e., every bounded and closed subset is compact); see e.g. \cite[Remark 6.2]{AMS2016}. For any function $f\in W^{1,2}(M)$, define
$$\D(f)=\int_M |\nabla f|_w^2\,\d\mu.$$
Then, for $f,g\in W^{1,2}(M)$, let
$$\D(f,g)=\frac{1}{4}[\D(f+g)-\D(f-g)].$$
By the  parallelogram law (due to the Hilbert structure of $W^{1,2}(X)$), we immediately derive that
\begin{eqnarray*}
\D(f,g)&=&\frac{1}{4}\Big[\int_M|\nabla(f+g)|_w^2\,\d\mu  - \int_M|\nabla(f-g)|_w^2\,\d\mu\Big]\\
&=&\int_M \Gamma(f,g)\,\d\mu,
\end{eqnarray*}
where
$$\Gamma(f,g)(x):= \lim_{\epsilon\downarrow0} \frac{|\nabla (g+\epsilon f)|_w^2(x)-|\nabla g|_w^2(x)}{2\epsilon},\quad\mbox{for }\mu\mbox{-a.e. }x\in M,$$
and the limit is taken in $L^1(M)$, which represents right the \emph{carr\'{e} du champ} (see e.g. \cite{BakryEmery1985}). It is known that $(\D, W^{1,2}(M))$ is a strongly local and regular Dirichlet form; see \cite[Section 4.3]{AmbrosioGigliSavare2011b} and \cite{AmbrosioGigliSavare2012}.  Recall that the Dirichlet form  $(\D,\mathscr{F})$ is
called regular if $W^{1,2}(M)\cap C_c(M)$ is dense both in $W^{1,2}(M)$ (with respect to the
$D_1^{1/2}$-norm defined by $\D_1(f,g)=\D(f,g)+\int_M fg\,\d\mu$) and in $C_c(M)$ (with respect to the supremum norm), and $(\D,\mathscr{F})$ is
called strongly local if for any $f,g\in W^{1,2}(M)$, $(f+a)g=0$ $\mu$-a.e. in  $M$ for some $a\in\R$, then $\D(f,g)=0$. See e.g. \cite{FOT2011}.

Denote by $(P_t)_{t\geq0}$ and $\Delta$ the heat flow and the infinitesimal generator, respectively, corresponding to $(\D,W^{1,2}(M))$. Let $(p_t)_{t\geq0}$ be the heat kernel corresponding to $(P_t)_{t\geq0}$. Then it is symmetric, i.e., for every $t>0$, $p_t(x,y)=p_t(y,x)$ for all $(x,y)\in M\times M$, and stochastically complete, i.e.,
\begin{eqnarray}\label{sto.com.}
\int_Mp_t(x,y)\, \d\mu(y)=1,\quad\mbox{for all }t>0\mbox{ and for all }x\in M.
\end{eqnarray}
Moreover, the author with Jiang and Zhang obtained the following heat kernel upper and lower bounds (see \cite[Theorem 1.1]{JLZ2014}).
\begin{proposition}\label{gassian-bound}
Let $(M,d,\mu)$ be an  $\RCD^*(0,N)$ space with $N\in [1,\infty)$.
Then, there exists a positive constant $C$ depending on $N$ such that
\begin{equation}\label{gassian-bound-1}
\frac{1}{C\mu(B(x,\sqrt t))}\exp\Big\{-\frac{d^2(x,y)}{3t}\Big\}\le p_t(x,y)\le \frac{C}{\mu(B(x,\sqrt t))}\exp\Big\{-\frac{d^2(x,y)}{5t}\Big\},
\end{equation}
for any $t>0$ and any $x,y\in M$.
\end{proposition}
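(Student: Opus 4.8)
\medskip
\noindent\textbf{Proof proposal.}
The estimate \eqref{gassian-bound-1} is quoted from the author's joint work with Jiang and Zhang, and the natural route to it is to reduce the two-sided bound to two stable geometric--analytic inputs available on any $\RCD^*(0,N)$ space---volume doubling and a scale-invariant $L^2$-Poincar\'e inequality---and then to run the De~Giorgi--Nash--Moser machinery for strongly local Dirichlet forms (in the form developed by Saloff-Coste, Grigor'yan and Sturm) to obtain the parabolic Harnack inequality together with the Gaussian bounds, keeping track of constants so as to reach the specific exponents $3$ and $5$. First I would record that $(M,d,\mu)$ is doubling: the Bishop--Gromov comparison of Proposition~\ref{vol-comparison} gives $\mu(B(x,2r))\le 2^N\mu(B(x,r))$ for all $x\in M$ and $r>0$. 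Next I would invoke the weak local $(2,2)$-Poincar\'e inequality, which holds on $\CD(K,N)$ (hence on $\RCD^*(0,N)$) spaces; together with doubling this places us in the setting where the heat semigroup $(P_t)_{t\ge0}$ associated with the regular strongly local form $(\D,W^{1,2}(M))$ admits a locally H\"older continuous kernel satisfying the parabolic Harnack inequality.

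For the upper bound I would first derive the on-diagonal estimate $p_t(x,x)\le C/\mu(B(x,\sqrt t))$. Doubling and the Poincar\'e inequality yield a scale of Faber--Krahn / Nash-type inequalities, which by Grigor'yan's equivalence give exactly this on-diagonal control. The strong locality of $(\D,W^{1,2}(M))$ provides finite propagation speed and hence the Davies--Gaffney $L^2$ off-diagonal estimate; a Davies perturbation argument---conjugating $P_t$ by $e^{\pm\psi}$ for bounded Lipschitz $\psi$ and optimizing the choice of $\psi$ against $d(x,y)$---then upgrades the on-diagonal $L^\infty$ bound to the pointwise Gaussian upper bound. The optimization produces a factor $\exp\{-d^2(x,y)/((4+\epsilon)t)\}$ for every $\epsilon>0$, and choosing $\epsilon$ so that $4+\epsilon\le5$ gives the stated exponent. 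For the lower bound I would first obtain the matching on-diagonal lower bound $p_t(x,x)\ge c/\mu(B(x,\sqrt t))$ from stochastic completeness \eqref{sto.com.} and the upper bound just proved; the parabolic Harnack inequality then promotes this to the near-diagonal lower bound $p_t(x,y)\ge c/\mu(B(x,\sqrt t))$ valid whenever $d(x,y)\le\theta\sqrt t$. Finally a chaining argument along a geodesic from $x$ to $y$---iterating the Chapman--Kolmogorov identity over $\sim d^2(x,y)/t$ near-diagonal steps and controlling the product of volume factors by doubling---yields the full Gaussian lower bound, and optimizing the number of steps gives the exponent $3$.

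The main obstacle is the parabolic Harnack inequality in this non-smooth setting: its proof requires a Moser iteration carried out with the minimal weak upper gradient $|\nabla\cdot|_w$ in place of a classical gradient, and justifying the Caccioppoli energy estimates, the cutoff manipulations and the John--Nirenberg step at the level of the Dirichlet form is the delicate part. An alternative, more intrinsic to $\RCD$, would replace the Moser route by the Li--Yau / Bakry--Qian gradient estimates derived from the self-improved Bochner inequality of Gigli--Savar\'e; there the obstacle shifts to justifying the relevant differential inequalities for the measure-valued Laplacian, after which integration along geodesics again yields \eqref{gassian-bound-1}. In either approach, the two non-sharp constants $3$ and $5$ are precisely the losses incurred, respectively, in the discrete chaining for the lower bound and in the Davies perturbation for the upper bound.
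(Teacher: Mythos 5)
The paper itself contains no proof of Proposition \ref{gassian-bound}: it is imported verbatim from the cited reference [JLZ2014] (Jiang--Li--Zhang), so your proposal must be measured against the argument given there. Your general framework --- Bishop--Gromov doubling from Proposition \ref{vol-comparison}, a local Poincar\'e inequality, Sturm's parabolic Harnack inequality for strongly local Dirichlet forms, and Davies' perturbation --- is indeed the standard machinery, and it does yield two-sided Gaussian bounds of the shape \eqref{gassian-bound-1}. The genuine gap is that this route cannot produce the explicit exponents $3$ and $5$, which are the entire content of the proposition. Concretely, in your chaining argument the number of steps $n$ is forced by the near-diagonal constraint $d(x,y)/n\le\theta\sqrt{t/n}$, i.e. $n\simeq d^2(x,y)/(\theta^2 t)$, and each step loses a factor $c_0<1$; the outcome is $p_t(x,y)\ge C^{-1}\mu(B(x,\sqrt t))^{-1}\exp\{-\log(1/c_0)\,d^2(x,y)/(\theta^2 t)\}$, where $\theta$ and $c_0$ are non-explicit outputs of the Moser/Harnack machinery. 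There is no ``optimization of the number of steps'' left to perform, and nothing forces the resulting constant down to $1/3$. The proof in [JLZ2014] reaches $3$ by a different mechanism: the Li--Yau gradient estimate, valid on $\RCD^*(0,N)$ spaces, whose integrated Harnack form $p_s(x,x)\le p_t(x,y)\,(t/s)^{N/2}\exp\{d^2(x,y)/(4(t-s))\}$, combined with the on-diagonal lower bound $p_s(x,x)\ge c/\mu(B(x,\sqrt s))$ and the choice $s=t/4$, gives exactly $\exp\{-d^2(x,y)/(3t)\}$ at the cost of a factor $4^{N/2}$ absorbed into $C$. You mention the Li--Yau route only as an ``alternative''; in fact it is the essential ingredient for the stated constant, not an aside.

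Your upper-bound sketch is closer to the actual argument, but it also glosses over the step where $5$ really arises: Davies' method produces $\exp\{-d^2(x,y)/((4+\epsilon)t)\}$ with the \emph{symmetric} normalization $(\mu(B(x,\sqrt t))\mu(B(y,\sqrt t)))^{-1/2}$, and rewriting the bound with $\mu(B(x,\sqrt t))$ alone, as in \eqref{gassian-bound-1}, requires trading $\mu(B(y,\sqrt t))$ for $\mu(B(x,\sqrt t))$ via doubling, which costs a polynomial factor $(1+d(x,y)/\sqrt t)^{N/2}$ that must then be absorbed by further relaxing the Gaussian exponent; it is this absorption, not merely ``choosing $4+\epsilon\le 5$,'' that fixes the constant. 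These points are not cosmetic for the present paper: the explicit lower-bound exponent $3$ is what produces the factor $e^{\alpha^2/6}$ in Corollary \ref{weighted-area}, so a proof yielding unspecified constants in the exponentials establishes a strictly weaker statement than the one being quoted.
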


For $f\in C_c(M)$ and $x\in M$, define the Littlewood--Paley $\mathcal{H}$-function and $\mathcal{H}_\ast$-function by
$$\mathcal{H}(f)(x)=\Big(\int_0^\infty|\nabla P_tf|_w^2(x)\,\d t\Big)^{1/2},$$
and
$$\mathcal{H}_\ast(f)(x)=\Big(\int_0^\infty\int_M|\nabla P_tf|_w^2(y)p_t(x,y)\,\d\mu(y)\d t\Big)^{1/2},$$
respectively. Then, it is easy to know that both $\mathcal{H}(f)$ and $\mathcal{H}_\ast(f)$ are bounded in $L^2(M,\mu)$. Indeed, on the one hand,
\begin{eqnarray*}
\int_M|\mathcal{H}(f)(x)|^2\,d\mu(x)&=&\int_M\int_0^\infty|\nabla P_tf|_w^2(x)\,\d t \d\mu(x)\\
&=&\int_0^\infty\Big(\int_M -\Delta(P_tf) P_tf\,\d\mu\Big)\,\d t\\
&=&-\int_0^\infty\int_M \Big(\frac{\d}{\d t}P_tf\Big)P_tf\,\d\mu \d t\\
&=&-\frac{1}{2}\int_0^\infty\int_M \frac{\d}{\d t}(P_tf)^2\,\d\mu \d t\\
&\leq&\frac{1}{2}\int_M f^2\,\d\mu,
\end{eqnarray*}
and on the other hand, by the symmetry $p_t(x,y)=p_t(y,x)$ and the stochastic completeness,
\begin{eqnarray*}
\int_M|\mathcal{H}_\ast(f)(x)|^2\,d\mu(x)&=&\int_M\int_0^\infty\int_M|\nabla P_tf|_w^2(y)p_t(x,y)\,\d\mu(y)\d t \d\mu(x)\\
&=&\int_0^\infty\int_M|\nabla P_tf|_w^2(y)\Big(\int_M p_t(x,y)\,\d\mu(x)\Big)\,\d\mu(y)\d t\\
&=&\int_0^\infty\int_M|\nabla P_tf|_w^2(y)\Big(\int_M p_t(y,x)\,\d\mu(x)\Big)\,\d\mu(y)\d t\\
&=&\int_M\int_0^\infty|\nabla P_tf|_w^2(y)\,\d t\d\mu(y);
\end{eqnarray*}
hence, $\|\mathcal{H}_\ast(f)\|_{L^2(M,\mu)}=\|\mathcal{H}(f)\|_{L^2(M,\mu)}\leq\|f\|_{L^2(M,\mu)}/2$.

Following \cite[Page 252]{PV2002} (see also \cite{BO2016}), we define the $p$-heat weight corresponding to the heat flow in the metric measure space.
\begin{definition}\label{heat-Ap}
Let $w: M\rightarrow [0,\infty]$ be a locally integrable function. For  $p\in(1,\infty)$, we say that $w$ is a $p$-heat weight, denoted by $w\in A_p^{heat}(M)$, if
$$\|w\|_{A_p^{heat}(M)}:=\big\|P_tw(P_tw^{-1/(p-1)})^{p-1}\big\|_{L^\infty(M\times[0,\infty),\mu\times \L^1)}<\infty,$$
where $\L^1$ is the one-dimensional Lebesgue measure restricted on $[0,\infty)$.
\end{definition}
Note that, by the H\"{o}lder inequality, we immediately have, for any $1<s\leq t<\infty$,
$$A_s^{heat}(M)\subset A_t^{heat}(M).$$
Indeed, for $w\in A_s^{heat}(M)$, H\"{o}lder's inequality implies that
\begin{eqnarray*}
&&\big(P_\tau w^{-1/(t-1)}\big)^{t-1}(x)=\Big(\int_M w(y)^{-1/(t-1)}p_\tau(x,y)\,\d\mu(y)\Big)^{t-1}\\
&\leq&\Big(\int_M (w(y)^{-1/(t-1)})^{(t-1)/(s-1)}p_\tau(x,y)\,\d\mu(y)\Big)^{s-1}\Big(\int_M p_\tau(x,y)\,\d\mu(y)\Big)^{t-s}\\
&\leq&\big(P_\tau w^{-1/(s-1)}\big)^{s-1}(x),
\end{eqnarray*}
for any $\tau\geq0$ and $x\in M$.

Now we are ready to present the main results. For any $p\in [1,\infty)$ and non-negative function $w\in L^1_{\loc}(M)$, let
$$L^p_w(M,\mu)=\Big\{f:M\rightarrow\R\mbox{ measurable }\Big| \int_M |f|^pw\,\d\mu<\infty\Big\},$$
and the norm of $f\in L^p_w(M,\mu)$ is defined by
$$\|f\|_{L^p_w(M,\mu)}=\Big(\int_M |f|^pw\,\d\mu\Big)^{1/p}.$$
\begin{theorem}\label{main}
Let $(M,d,\mu)$ be an $\RCD^\ast(0,N)$ space with $N\in[1,\infty)$ and $w\in A_2^{heat}(M)$.
Suppose that
\begin{equation}\label{assumption-main}
\limsup_{r\rightarrow\infty}\frac{\mu(B(o,r))}{r^N}>0,\quad\mbox{for some }o\in M.
\end{equation}
Then, for every $f\in C_c(M)$,
\begin{equation}\label{main-1}
\|f\|_{L^2_w(M,\mu)}\leq (320\|w\|_{A_2^{heat}(M)})^{1/2}\|\mathcal{H}_\ast(f)\|_{L^2_w(M,\mu)},
\end{equation}
\begin{equation}\label{main-2}
\|\mathcal{H}_\ast(f)\|_{L^2_w(M,\mu)}\leq \frac{\sqrt{2}}{2}\inf_{1<s<2}\Big(\frac{s}{2-s}\|w\|_{A_s^{heat}(M)}\Big)^{1/2}\|f\|_{L^2_w(M,\mu)},
\end{equation}
and moreover,
\begin{equation}\label{main-3}
\|\mathcal{H}_\ast(f)\|_{L^2_w(M,\mu)}\leq 2^{5/4}\|w\|_{A_2^{heat}(M)}\|f\|_{L^2_w(M,\mu)},
\end{equation}
\begin{equation}\label{main-4}
\|\mathcal{H}(f)\|_{L^2_w(M,\mu)}\leq 2^{7/4}\|w\|_{A_2^{heat}(M)}\|f\|_{L^2_w(M,\mu)}.
\end{equation}
\end{theorem}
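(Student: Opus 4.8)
The plan is to realize both sides of each inequality as the terminal value and the quadratic variation of one continuous martingale, and then to read off \eqref{main-1}--\eqref{main-3} from Theorem \ref{L2inequality}. Let $(X_s)_{s\ge0}$ be the $\mu$-symmetric diffusion with continuous paths associated with the Dirichlet form $(\D,W^{1,2}(M))$, and fix $T\in(0,\infty)$. For $f\in C_c(M)$ I would consider the space--time process $M_s:=P_{T-s}f(X_s)$, $s\in[0,T]$. Since $u(x,t):=P_tf(x)$ solves $\partial_t u=\Delta u$, the bounded variation part of $s\mapsto M_s$ in the Fukushima decomposition vanishes, so $M$ is a continuous martingale with $M_T=f(X_T)$ and quadratic variation
\begin{equation*}
\langle M\rangle_T=2\int_0^T|\nabla P_{T-s}f|_w^2(X_s)\,\d s,
\end{equation*}
the factor $2$ being forced by the normalization $\D(u)=\int_M|\nabla u|_w^2\,\d\mu$ of the carr\'e du champ $\Gamma$. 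In the same way $Y_s:=P_{T-s}w(X_s)$ is a non-negative continuous martingale with $Y_T=w(X_T)$, playing the role of the weight: the induced measure $\mathbb{Q}$ has density $w(X_T)$ with respect to the law of the process.

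The core is the translation of the two norms and of the Muckenhoupt condition. Running $X$ from the invariant measure $\mu$ and using the symmetry $p_t(x,y)=p_t(y,x)$ with reversibility, I would first check that, conditionally on the endpoint $X_T=x$, the reversed path is again governed by $P_t$, whence
\begin{equation*}
\E\big[\langle M\rangle_T\,\big|\,X_T=x\big]=2\int_0^T P_t\big(|\nabla P_tf|_w^2\big)(x)\,\d t\;\xrightarrow[T\to\infty]{}\;2\,\mathcal{H}_\ast(f)^2(x).
\end{equation*}
Integrating against the density $w\,\d\mu$ of $\mathbb{Q}$ and using stationarity of $X_T$ then gives $\|X_T\|_{L^2(\mathbb{Q})}=\|f\|_{L^2_w(M,\mu)}$ and $\|\langle M\rangle_T^{1/2}\|_{L^2(\mathbb{Q})}^2\to 2\|\mathcal{H}_\ast(f)\|_{L^2_w(M,\mu)}^2$. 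Next, for every $1<s\le2$, conditioning $Y_T=w(X_T)$ on $\mathcal{F}_t$ produces precisely $P_{T-t}w(X_t)\big(P_{T-t}(w^{-1/(s-1)})(X_t)\big)^{s-1}$, so that, by Definition \ref{heat-Ap},
\begin{equation*}
\|Y\|_{A_s^{mart}}\le\big\|P_\tau w\,(P_\tau w^{-1/(s-1)})^{s-1}\big\|_{L^\infty(M\times[0,\infty))}=\|w\|_{A_s^{heat}(M)}.
\end{equation*}
With these identifications, \eqref{L2-1}, \eqref{L2-2} and \eqref{L2-3} of Theorem \ref{L2inequality}, after letting $T\to\infty$, deliver the three estimates \eqref{main-1}, \eqref{main-2} and \eqref{main-3}.

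For \eqref{main-4} I would argue pointwise instead. Writing $s=2t$ and combining the semigroup property $P_{2t}=P_tP_t$ with the Bakry--Émery gradient estimate $|\nabla P_t h|_w^2\le P_t(|\nabla h|_w^2)$, valid in $\RCD^\ast(0,N)$, one obtains
\begin{equation*}
\mathcal{H}(f)^2(x)=2\int_0^\infty|\nabla P_{2t}f|_w^2(x)\,\d t\le2\int_0^\infty P_t\big(|\nabla P_tf|_w^2\big)(x)\,\d t=2\,\mathcal{H}_\ast(f)^2(x),
\end{equation*}
so that $\|\mathcal{H}(f)\|_{L^2_w(M,\mu)}\le\sqrt2\,\|\mathcal{H}_\ast(f)\|_{L^2_w(M,\mu)}$ and \eqref{main-4} follows from \eqref{main-3}.

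The main obstacle will be measure-theoretic. The measure $\mu$ is in general only $\sigma$-finite with infinite total mass (indeed \eqref{assumption-main} forces $M$ to be noncompact with $\mu(M)=\infty$), so neither the stationary law $\P_\mu$ nor $\mathbb{Q}$ is a probability measure, whereas Theorem \ref{L2inequality} is stated on a probability space. I would remove this difficulty through an exhaustion $M=\bigcup_n K_n$ by sets of finite measure, applying the martingale inequalities to the process issued from the normalized restrictions of $\mu$ and passing to the limit; here the Gaussian bounds of Proposition \ref{gassian-bound} together with \eqref{assumption-main} guarantee that $P_Tf\to0$ as $T\to\infty$, so that the boundary contribution $M_0=P_Tf(X_0)$ is negligible, and that all limiting integrals are finite. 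The remaining technical point is to justify the Fukushima decomposition and the exact value of $\langle M\rangle_T$ in the low-regularity $\RCD$ setting, where $\Gamma$ is only an $L^1$ carr\'e du champ.
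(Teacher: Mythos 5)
Your proposal is, in its essentials, the paper's own proof: the same space--time martingales, the same weight process $Y_t=P_{T-t}w(Z_t)$ with the transfer $\|Y\|_{A_s^{mart}}\le\|w\|_{A_s^{heat}(M)}$ (Lemma \ref{muck-heat}), the same appeal to \eqref{L2-1}--\eqref{L2-3}, the same use of the Gaussian upper bound plus \eqref{assumption-main} to make $P_Tf$ harmless as $T\rightarrow\infty$, and an identical derivation of \eqref{main-4} from $\mathcal{H}(f)^2\le 2\mathcal{H}_\ast(f)^2$ and \eqref{main-3}. The one structural divergence is the globalization step: you start the diffusion from the stationary (infinite) measure and identify $\E[\langle M\rangle_T\mid X_T=x]$ by time reversal, which is what forces your ``main obstacle'' that neither $\P_\mu$ nor $\mathbb{Q}$ is a probability measure. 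The paper never meets this obstacle: it applies Theorem \ref{L2inequality} under each $\P^x$ with $x$ fixed (a genuine probability setting), and only afterwards integrates in $x$ against $\mu$ via the Markov property; your reversal identity is exactly Lemma \ref{H_T}, which is proved there directly from $p_t(x,y)=p_t(y,x)$ and stochastic completeness, with no stationarity needed. So the exhaustion you propose is an artifact of your formulation, and the cleanest repair is to reorganize as the paper does rather than to exhaust.

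Beyond being unnecessary, your patch has two genuinely shaky points. First, under the normalized restrictions $\mu|_{K_n}/\mu(K_n)$ the process is no longer stationary, so the time-reversal computation of the conditional bracket is no longer available and would have to be replaced by the heat-kernel identity of Lemma \ref{H_T} anyway. Second, ``all limiting integrals are finite'' is not automatic: in your normalization the boundary term is $\int_M\E_x\big[|P_Tf(x)|^2w(Z_T)\big]\,\d\mu(x)=\int_M|P_Tf|^2\,P_Tw\,\d\mu$, and since $\int_M w\,\d\mu$ may well be infinite, its finiteness requires local integrability of $P_Tw$, which is not given for free. The paper sidesteps this by writing $|f(Z_T)|^2w(Z_T)=\mathbf{1}_K(Z_T)|f(Z_T)|^2w(Z_T)$ with $K=\supp f$ and carrying the indicator through the splitting, so the error term is at most $C\,T^{-N/2}\|f\|_{L^2(M,\mu)}^2\int_K w\,\d\mu$ (with $C$ depending on $N$ and the $\limsup$ in \eqref{assumption-main}), which is finite because $w\in L^1_{\loc}(M)$ and vanishes as $T\rightarrow\infty$. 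Relatedly, inequality \eqref{L2-1} cannot hold for a martingale with $X_0\neq0$ whose bracket omits the initial value (a constant martingale is a counterexample), so your $M_s=P_{T-s}f(X_s)$ must either be centered, as in the paper's $\mathcal{M}(f)_t=P_{T-t}f(Z_t)-P_Tf(Z_0)$, or have $|M_0|^2$ added to the bracket, as in the paper's $\langle\mathcal{N}(f)\rangle$; this is precisely where the vanishing of $P_Tf$ must enter, and it must enter together with the localization just described in order to make sense.
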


An immediate observation is that $\limsup_{r\rightarrow\infty}\frac{\mu(B(x,r))}{r^N}$  is independent of $x$. Indeed, for any $x, y\in M$,
$$\limsup_{r\rightarrow\infty}\frac{\mu(B(x,r))}{r^N}\leq\limsup_{r\rightarrow\infty}\frac{\mu(B(y,r+d(x,y)))}{r^N}
=\limsup_{r\rightarrow\infty}\frac{\mu(B(y,r))}{r^N},$$
and the same inequality holds if we interchange the roles of $x$ and $y$. In addition, if the dimension $N$ is required to be an integer no less than 2, then sharper heat kernel estimates can be established; see \cite[Theorem 3.12]{LiH2017}.

Let $Z=\big((Z_t)_{t\geq0}, (\P^x)_{x\in M\setminus \mathcal{N}}\big)$ be the $\mu$-symmetric Hunt process corresponding to the Dirichlet form $(\D, W^{1,2}(M))$, where $\mathcal{N}$ is a properly exceptional set in the sense that $\mu(\mathcal{N})=0$ and $\P^x(Z_t\in \mathcal{N}\mbox{ for some }t>0)=0$ for all $x\in M\setminus\mathcal{N}$. Indeed, $Z$ is a $\mu$-symmetric diffusion with continuous path in the sense that
$$\P^x\big(t\mapsto Z_t\mbox{ is continuous for }t\in[0,\zeta)\big)=1,\quad\mbox{for every }x\in M\setminus\mathcal{N},$$
where $\zeta$ is the life time of $Z$. See  \cite{FOT2011} for instance.  Furthermore,  it can be shown  by the approach used to prove \cite[Thoerem 1.2 (c)]{ASZ2009} that
$$\P^x\big(t\mapsto Z_t\mbox{ is continuous for }t\in(0,\infty)\big)=1,\quad\mbox{for every }x\in M.$$

Now fix $T>0$. For $f\in C_c(M)$, define the processes $\mathcal{M}(f)=(\mathcal{M}(f)_t)_{0\leq t\leq T}$ and $\mathcal{N}(f)=(\mathcal{N}(f)_t)_{0\leq t\leq T}$  by
$$\mathcal{M}(f)_t=P_{T-t}f(Z_t)-P_Tf(Z_0),\quad 0\leq t\leq T,$$
and
$$\mathcal{N}(f)_t=P_{T-t}f(Z_t),\quad 0\leq t\leq T,$$
respectively. Denote the natural filtration of the process $(Z_t)_{t\geq0}$ by $(\mathcal{F}_t)_{t\geq0}$. Then the following lemma shows that $(\mathcal{M}(f)_t,\mathcal{F}_t)_{0\leq t\leq T}$ and $(\mathcal{N}(f)_t,\mathcal{F}_t)_{0\leq t\leq T}$ are martingales. We should mention that the result is not new in the smoothing setting and can be derived directly from It\^{o}'s formula; see e.g. \cite{BM2003}.  The approach of proof employed below is general and does not depend on It\^{o}'s formula (see a recent work \cite[Section 3]{LiWang2016} for non-local Dirichlet forms case).
\begin{lemma}\label{martingale}
Suppose that $(M,d,\mu)$ is an $\RCD^\ast(K,N)$ space with $K\in\R$ and $N\in[1,\infty)$. Let $T>0$ and $f\in C_c(M)$.  Then $(\mathcal{M}(f)_t,\mathcal{F}_t)_{0\leq t\leq T}$ and $(\mathcal{N}(f)_t,\mathcal{F}_t)_{0\leq t\leq T}$ defined above are uniformly integrable martingales with continuous path, and moreover, for any $t\in[0,T]$, the quadratic variations are
$$\langle\mathcal{M}(f)\rangle_t=2\int_0^t|\nabla P_{T-r}f|_w^2(Z_r)\,\d r,$$
and
$$\langle\mathcal{N}(f)\rangle_t=|P_Tf(x)|^2 + 2\int_0^t|\nabla P_{T-r}f|_w^2(Z_r)\,\d r,$$
respectively.
\end{lemma}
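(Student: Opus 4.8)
The plan is to treat the martingale property (together with uniform integrability and path continuity) and the computation of the quadratic variations separately, reducing both to the Markov property of $Z$ and the semigroup calculus, so that no appeal to the It\^o formula is needed.

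First I would record the martingale property. Because $f\in C_c(M)$ and $(P_t)_{t\ge0}$ is a conservative sub-Markovian contraction, $\|P_{T-t}f\|_\infty\le\|f\|_\infty$ for all $t\in[0,T]$; hence $\mathcal{N}(f)$ and $\mathcal{M}(f)=\mathcal{N}(f)-P_Tf(Z_0)$ are uniformly bounded, so uniformly integrable. For $0\le s\le t\le T$ the Markov property of $Z$ and the semigroup law $P_{t-s}P_{T-t}=P_{T-s}$ give
\begin{equation*}
\E^x\big[\mathcal{N}(f)_t\,\big|\,\mathcal{F}_s\big]=P_{t-s}(P_{T-t}f)(Z_s)=P_{T-s}f(Z_s)=\mathcal{N}(f)_s,
\end{equation*}
and, since $P_Tf(Z_0)$ is $\mathcal{F}_0$-measurable, $\mathcal{M}(f)$ is a martingale as well. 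Path continuity I would obtain by combining the continuity of $t\mapsto Z_t$ with the joint continuity of $(t,x)\mapsto P_{T-t}f(x)$: on $[0,T)\times M$ this follows from the local H\"older continuity of the heat kernel together with dominated convergence governed by the Gaussian bounds of Proposition \ref{gassian-bound}, while at the terminal time $t=T$ it follows from $P_sf\to f$ uniformly as $s\downarrow0$, valid for $f\in C_c(M)$ on the proper space $M$.

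The analytic core of the quadratic variation is the semigroup energy identity
\begin{equation*}
P_\tau(g^2)-(P_\tau g)^2=2\int_0^\tau P_\sigma\big(|\nabla P_{\tau-\sigma}g|_w^2\big)\,\d\sigma,\qquad \tau\ge0,
\end{equation*}
which I would prove by differentiating $\sigma\mapsto P_\sigma\big((P_{\tau-\sigma}g)^2\big)$: using $\tfrac{\d}{\d\sigma}P_{\tau-\sigma}g=-\Delta P_{\tau-\sigma}g$ and the carr\'e du champ identity $\Delta(h^2)-2h\,\Delta h=2\Gamma(h)=2|\nabla h|_w^2$ one obtains $\tfrac{\d}{\d\sigma}P_\sigma\big((P_{\tau-\sigma}g)^2\big)=2P_\sigma\big(|\nabla P_{\tau-\sigma}g|_w^2\big)$, and integration over $[0,\tau]$ yields the identity. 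To assemble the quadratic variation, set $A_t:=2\int_0^t|\nabla P_{T-r}f|_w^2(Z_r)\,\d r$ and show that $\mathcal{N}(f)_t^2-A_t$ is a martingale. For $0\le s\le t\le T$ the Markov property gives
\begin{equation*}
\E^x\big[\mathcal{N}(f)_t^2\,\big|\,\mathcal{F}_s\big]-\mathcal{N}(f)_s^2=P_{t-s}\big((P_{T-t}f)^2\big)(Z_s)-\big(P_{T-s}f(Z_s)\big)^2.
\end{equation*}
Applying the energy identity with $g=P_{T-t}f$ and $\tau=t-s$, using $P_{t-s-\sigma}P_{T-t}f=P_{T-s-\sigma}f$ and substituting $r=s+\sigma$, the right-hand side becomes $2\int_s^t P_{r-s}\big(|\nabla P_{T-r}f|_w^2\big)(Z_s)\,\d r$; on the other hand the Markov property and Fubini give $\E^x[A_t-A_s\mid\mathcal{F}_s]$ equal to the same quantity. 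Hence $\mathcal{N}(f)_t^2-A_t$ is a martingale with initial value $|P_Tf(x)|^2$, which is precisely the stated formula for $\langle\mathcal{N}(f)\rangle_t$; since $\mathcal{M}(f)_0=0$ and $\mathcal{M}(f)$ differs from $\mathcal{N}(f)$ by the $\mathcal{F}_0$-measurable constant $P_Tf(Z_0)$, the same computation gives $\langle\mathcal{M}(f)\rangle_t=A_t$.

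I expect the main obstacle to lie in the regularity and integrability bookkeeping rather than in the algebra. Concretely: (i) justifying the differentiation under $P_\sigma$ and the endpoint behaviour in the energy identity requires $P_{T-t}f$ to lie in the domain of $\Delta$ with the $\Gamma$-calculus applicable, which the $\RCD$ structure and Bakry--\'Emery theory supply for $t<T$; (ii) one must guarantee finiteness of $\int_0^t|\nabla P_{T-r}f|_w^2(Z_r)\,\d r$ and the interchange of integration with conditional expectation, where the Gaussian heat kernel bounds of Proposition \ref{gassian-bound} are the decisive quantitative tool; and (iii) the conclusions should hold for every starting point $x$, which requires controlling the properly exceptional set $\mathcal{N}$ in order to upgrade $\mu$-a.e.\ identities to everywhere statements.
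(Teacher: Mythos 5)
Your proposal is correct and takes essentially the same route as the paper: the martingale property comes from the Markov property together with the semigroup law, and the quadratic variation from differentiating $r\mapsto P_r\big((P_{T-r}f)^2\big)$ and invoking the carr\'e du champ identity $\Delta(h^2)-2h\Delta h=2\Gamma(h)=2|\nabla h|_w^2$ — your ``energy identity'' is precisely the paper's inline computation, merely packaged as a lemma and applied conditionally at $Z_s$ rather than from an arbitrary starting point followed by a second use of the Markov property. One caveat in your bookkeeping: you justify path continuity and integrability via the Gaussian bounds of Proposition \ref{gassian-bound}, which are only available under $\RCD^\ast(0,N)$, whereas the lemma is stated for $\RCD^\ast(K,N)$ with arbitrary $K\in\R$; the paper avoids this by citing \cite[Theorem 7.1 (iii)]{agmr2015} for the joint continuity of $(t,x)\mapsto P_tf(x)$, which needs no sign condition on the curvature, and you should do the same.
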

\begin{proof}
We only need to prove the assertions for $\mathcal{M}(f)$, since the proof for $\mathcal{N}(f)$ is similar. Let $0\leq s\leq t\leq T$. By the Markov property,
$$P_{T-t}f(Z_t)=\E_{Z_t}f(Z_{T-t})=\E[f(Z_T)|\mathcal{F}_t],$$
and hence
\begin{eqnarray*}
\E[\mathcal{M}(f)_t|\mathcal{F}_s]&=&\E[P_{T-t}f(Z_t)-P_Tf(Z_0)|\mathcal{F}_s]\\
&=&\E[P_{T-t}f(Z_t)|\mathcal{F}_s]-P_Tf(Z_0)\\
&=&\E\{\E[f(Z_T)|\mathcal{F}_t]|\mathcal{F}_s\}-P_Tf(Z_0)\\
&=&\E[f(Z_T)|\mathcal{F}_s]-P_Tf(Z_0)\\
&=&P_{T-s}f(Z_s)-P_Tf(Z_0)\\
&=&\mathcal{M}(f)_s,
\end{eqnarray*}
which implies that $(\mathcal{M}(f)_t,\mathcal{F}_t)_{0\leq t\leq T}$ is a martingale with continuous path, since $t\mapsto Z_t$ is continuous and the map $(t,x)\mapsto P_tf(x)$ belongs to $C_b((0,\infty)\times M)$ (see \cite[Theorem 7.1 (iii)]{agmr2015}). In addition, it is easy to know that the family  $\{\mathcal{M}(f)_t:0\leq t\leq T\}$ is uniformly integrable

For every $x\in M$, since
\begin{eqnarray*}
\E_x[\mathcal{M}(f)_t^2]&=&\E_x[(P_{T-t}f(Z_t)-P_Tf(Z_0))^2]\\
&=&\E_x[(P_{T-t}f(Z_t))^2]-2P_Tf(x)\E_x[P_{T-t}f(Z_t)]+(P_Tf(x))^2\\
&=&P_t(P_{T-t}f)^2(x)-(P_Tf(x))^2,
\end{eqnarray*}
we have
\begin{eqnarray*}
\E_x[\mathcal{M}(f)_t^2-\mathcal{M}(f)_s^2]&=&P_t(P_{T-t}f)^2(x)-P_s(P_{T-s}f)^2(x)\\
&=&\int_s^t \frac{\d P_r(P_{T-r}f)^2(x)}{\d r}\,\d r\\
&=&\int_s^t\big(\Delta P_r(P_{T-r}f)^2(x)-2P_r\big(P_{T-r}f\cdot \Delta (P_{T-r}f)\big)(x)\big)\,\d r\\
&=&\int_s^tP_r\big(\Delta (P_{T-r}f)^2 -2P_{T-r}f\cdot \Delta(P_{T-r}f)\big)(x)\,\d r\\
&=&2\int_s^tP_r\big(\Gamma(P_{T-r}f)\big)(x)\,\d r\\
&=&\E_x\Big[2\int_s^t|\nabla P_{T-r}f|_w^2(Z_r)\,\d r\Big],
\end{eqnarray*}
where in the last line  we used the fact that (see e.g. \cite{AmbrosioGigliSavare2011b,AMS2016})
$$|\nabla f|_w^2=\Gamma(f,f):=\frac{1}{2}\big(\Delta(f^2)-2f\Delta f\big),\quad\mbox{for every }f\in W^{1,2}(M).$$
Hence, applying the Markov property again, we derive that
$$\E_x\Big[\mathcal{M}(f)_t^2-2\int_0^t|\nabla P_{T-r}f|_w^2(Z_r)\,\d r\Big|\mathcal{F}_s\Big]=\mathcal{M}(f)_s^2-2\int_0^s|\nabla P_{T-r}f|_w^2(Z_r)\,\d r.$$
Thus,
$$\Big(\mathcal{M}(f)_t^2-2\int_0^t|\nabla P_{T-r}f|_w^2(Z_r)\,\d r,\, \mathcal{F}_t\Big)_{0\leq t\leq T}$$
is also a martingale.

Therefore, since $\mathcal{M}(f)_0=0$, the quadratic variation of $\mathcal{M}(f)_t$ is
\begin{eqnarray*}
\langle\mathcal{M}(f)\rangle_t=2\int_0^t|\nabla P_{T-r}f|_w^2(Z_r)\,\d r.
\end{eqnarray*}\end{proof}

The next lemma  expresses the $\mathcal{H}_{\ast,T}$-function as a conditional distribution of the quadratic variation of $\mathcal{M}(f)$, given $Z_T$. The proof, which we present here for the sake of completeness, is the same as the one for \cite[(5.18)]{BO2016}. Note that only the symmetry of the heat kernel and the stochastic completeness are used in the proof.
\begin{lemma}\label{H_T}
Suppose that $(M,d,\mu)$ is an $\RCD^\ast(K,N)$ space with $K\in\R$ and $N\in[1,\infty)$. Let $T>0$, $f\in C_c(M)$  and $x\in M$. Define
$$\mathcal{H}_{\ast,T}(f)(x)=\Big(\int_0^T\int_M|\nabla P_tf|_w^2(y)p_t(x,y)\,\d\mu(y)\d t\Big)^{1/2}.$$
Then $\lim_{T\rightarrow\infty}\mathcal{H}_{\ast,T}(f)(x)=\mathcal{H}_{\ast}(f)(x)$, and
$$\mathcal{H}_{\ast,T}(f)(x)=\Big(\int_M \mathbb{E}_y\Big[\int_0^T|\nabla P_{T-r}f|_w^2(Z_r)\,\d r\Big| Z_T=x\Big]p_T(x,y)\,\d\mu(y)\Big)^{1/2}.$$
\end{lemma}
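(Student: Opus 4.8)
The plan is to treat the two assertions separately. For the limit $\mathcal{H}_{\ast,T}(f)(x)\to\mathcal{H}_\ast(f)(x)$ as $T\to\infty$, I would observe that the integrand $\int_M|\nabla P_tf|_w^2(y)\,p_t(x,y)\,\d\mu(y)$ is non-negative as a function of $t$, so that $T\mapsto \mathcal{H}_{\ast,T}(f)(x)^2=\int_0^T\int_M|\nabla P_tf|_w^2(y)\,p_t(x,y)\,\d\mu(y)\,\d t$ is non-decreasing and, by the monotone convergence theorem, converges to $\mathcal{H}_\ast(f)(x)^2$; taking square roots gives the first claim.

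The substance is the second identity, whose heart is to compute the inner conditional expectation. Writing $g(r,z):=|\nabla P_{T-r}f|_w^2(z)$, I would fix $y\in M$ and characterize $u_y(x):=\mathbb{E}_y[\int_0^T g(r,Z_r)\,\d r\mid Z_T=x]$, a version of the regular conditional expectation, through its defining property: for every bounded measurable $\phi$,
$$\int_M u_y(x)\,\phi(x)\,p_T(y,x)\,\d\mu(x)=\mathbb{E}_y\Big[\phi(Z_T)\int_0^T g(r,Z_r)\,\d r\Big],$$
where I use that $Z_T$ has law $p_T(y,\cdot)\,\d\mu$ under $\P^y$. Expanding the right-hand side by Fubini and the Markov property, the joint density of $(Z_r,Z_T)$ under $\P^y$ being $p_r(y,z)\,p_{T-r}(z,x)$, this equals $\int_M\int_M\int_0^T g(r,z)\,\phi(x)\,p_r(y,z)\,p_{T-r}(z,x)\,\d r\,\d\mu(z)\,\d\mu(x)$; since $\phi$ is arbitrary I would conclude, for $\mu$-a.e.\ $x$,
$$u_y(x)\,p_T(y,x)=\int_0^T\int_M g(r,z)\,p_r(y,z)\,p_{T-r}(z,x)\,\d\mu(z)\,\d r.$$

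With this bridge identity in hand, the remainder is bookkeeping. By the symmetry $p_T(y,x)=p_T(x,y)$, the quantity under the square root on the right-hand side of the lemma is $\int_M u_y(x)\,p_T(x,y)\,\d\mu(y)$, which by the last display equals $\int_M\int_0^T\int_M g(r,z)\,p_r(y,z)\,p_{T-r}(z,x)\,\d\mu(z)\,\d r\,\d\mu(y)$. All integrands being non-negative, I would apply Fubini to integrate in $y$ first and invoke symmetry together with the stochastic completeness \eqref{sto.com.} to get $\int_M p_r(y,z)\,\d\mu(y)=\int_M p_r(z,y)\,\d\mu(y)=1$, collapsing the inner $y$-integral. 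What remains is $\int_0^T\int_M g(r,z)\,p_{T-r}(z,x)\,\d\mu(z)\,\d r$, and the substitution $t=T-r$, with $g(r,z)=|\nabla P_tf|_w^2(z)$ and $p_{T-r}(z,x)=p_t(x,z)$, recovers exactly $\mathcal{H}_{\ast,T}(f)(x)^2$.

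The main obstacle is the bridge identity itself: one is conditioning on the $\P^y$-null event $\{Z_T=x\}$, so $u_y$ must be handled as a regular conditional expectation and pinned down through the test-function characterization above rather than by any naive pointwise division. Once the transition-density structure identifies the conditional law of $Z_r$ given $Z_T=x$ with density $p_r(y,z)\,p_{T-r}(z,x)/p_T(y,x)$, the surviving steps are only Fubini, heat kernel symmetry, and stochastic completeness.
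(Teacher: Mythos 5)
Your proof is correct and takes essentially the same approach as the paper: both arguments hinge on identifying the conditional law of $Z_r$ under $\P^y$ given $Z_T=x$ with the bridge density $p_r(y,z)p_{T-r}(z,x)/p_T(y,x)$, and otherwise use only heat-kernel symmetry, stochastic completeness (to insert or collapse the $y$-integral via $\int_M p_r(y,z)\,\d\mu(y)=1$), Fubini, and the substitution $t=T-r$, with the first assertion dispatched by monotone convergence. The only difference is directional and cosmetic: the paper computes from $\mathcal{H}_{\ast,T}(f)^2(x)$ toward the conditional-expectation expression, while you run the identity in reverse and justify the conditioning-on-a-null-event step explicitly through the test-function characterization, which the paper leaves implicit in the phrase ``the definition of the conditional distribution.''
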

\begin{proof} We only need to prove the second assertion. By the symmetry of the heat kernel and the stochastic completeness,
\begin{eqnarray*}
\mathcal{H}_{\ast,T}(f)^2(x)&=&\int_0^T\int_M|\nabla P_{s}f|_w^2(z)p_{s}(x,z)\,\d\mu(z)\d s\\
&=&\int_0^T\int_Mp_{T-r}(z,x)|\nabla P_{T-r}f|_w^2(z)\,\d\mu(z)\d r\\
&=&\int_0^T\int_Mp_{T-r}(z,x)|\nabla P_{T-r}f|_w^2(z)\Big(\int_M p_r(y,z)\,\d\mu(y)\Big)\,\d\mu(z)\d r\\
&=&\int_M\Big(\int_0^T\int_M\frac{p_r(y,z)p_{T-r}(z,x)}{p_T(y,x)}|\nabla P_{T-r}f|_w^2(z)\,\d\mu(z)\d r\Big)p_T(y,x)\,\d\mu(y)\\
&=&\int_M \mathbb{E}_y\Big[\int_0^T|\nabla P_{T-r}f|_w^2(Z_r)\,\d r\Big| Z_T=x\Big]p_T(x,y)\,\d\mu(y),
\end{eqnarray*}
where we used the definition of the conditional distribution of $Z_r$ under $\P^y$ given $Z_T=x$ in the last equality.
\end{proof}

We borrow a lemma from \cite[Lemma 5.2]{BO2016} and omit its proof here.
\begin{lemma}\label{muck-heat}
Fix $T>0$ and $x\in M$. Let $(Z_t)_{t\geq0}$ be the diffusion process as above with $Z_0=x$. Suppose $w\in A_p^{heat}(M)$. Consider the process $Y_t=P_{T-t}w(Z_t)$, $0\leq t\leq T$, under the probability measure $\P^x$.  Then $Y_T\in A_p^{mart}$ and $$\|Y_T\|_{A_p^{mart}}\leq\|w\|_{A_p^{heat}(M)}.$$
\end{lemma}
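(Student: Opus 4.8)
The plan is to use the Markov property of the diffusion $Z$ twice in order to rewrite the martingale Muckenhoupt quantity $\|Y_T\|_{A_p^{mart}}$ as a pointwise evaluation, along the path of $Z$, of the very expression that defines $A_p^{heat}(M)$, and then to invoke Definition \ref{heat-Ap} directly.

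First I would record that $(Y_t)_{0\le t\le T}$ is exactly the martingale generated by $Y_T$. Since $Y_T=P_0w(Z_T)=w(Z_T)$, the Markov property gives, for $0\le t\le T$,
$$\E^x\big[Y_T\,\big|\,\mathcal{F}_t\big]=\E^x\big[w(Z_T)\,\big|\,\mathcal{F}_t\big]=\E^{Z_t}\big[w(Z_{T-t})\big]=P_{T-t}w(Z_t)=Y_t,$$
so that $Y_t=\E^x[Y_T|\mathcal{F}_t]$ as required by the definition of $A_p^{mart}$; that $(Y_t)$ is a non-negative, uniformly integrable martingale with continuous path follows exactly as in Lemma \ref{martingale} (with $w$ in place of $f$, using that $P_sw$ is finite and continuous, which is guaranteed once $\|w\|_{A_p^{heat}(M)}<\infty$). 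Here one notes in passing that a finite $A_p^{heat}$-norm forces $P_sw^{-1/(p-1)}<\infty$ $\mu$-a.e., hence $w>0$ $\mu$-a.e., so $Y_T^{-1/(p-1)}$ is well defined $\P^x$-a.s.

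Next I would compute the conditional expectation inside the definition. Because $Y_t$ is $\mathcal{F}_t$-measurable it factors out, and a second application of the Markov property, this time to $Y_T^{-1/(p-1)}=w(Z_T)^{-1/(p-1)}$, gives
$$\E^x\Big[\Big(\frac{Y_t}{Y_T}\Big)^{1/(p-1)}\,\Big|\,\mathcal{F}_t\Big]=Y_t^{1/(p-1)}\,\E^x\big[w(Z_T)^{-1/(p-1)}\,\big|\,\mathcal{F}_t\big]=\big(P_{T-t}w(Z_t)\big)^{1/(p-1)}\,P_{T-t}\big(w^{-1/(p-1)}\big)(Z_t).$$
Raising to the power $p-1$ yields the clean identity
$$\Big(\E^x\Big[\Big(\frac{Y_t}{Y_T}\Big)^{1/(p-1)}\,\Big|\,\mathcal{F}_t\Big]\Big)^{p-1}=P_{T-t}w(Z_t)\,\big(P_{T-t}\big(w^{-1/(p-1)}\big)(Z_t)\big)^{p-1},$$
which is precisely the integrand of Definition \ref{heat-Ap}, evaluated at the random point $Z_t$ with time parameter $s=T-t$. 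Bounding the right-hand side by $\|w\|_{A_p^{heat}(M)}$, taking the $L^\infty(\P^x)$-norm and then the supremum over $t\in[0,T]$ would give $\|Y_T\|_{A_p^{mart}}\le\|w\|_{A_p^{heat}(M)}$.

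The one point requiring care — and the main, if mild, obstacle — is that Definition \ref{heat-Ap} only provides the inequality $P_sw\,(P_sw^{-1/(p-1)})^{p-1}\le\|w\|_{A_p^{heat}(M)}$ for $(\mu\times\L^1)$-a.e.\ $(y,s)$, while we must evaluate it at the random point $Z_t$ for a fixed time $s=T-t$. The spatial null set is harmless: under $\P^x$ the law of $Z_t$ has density $p_t(x,\cdot)$ with respect to $\mu$, which is strictly positive by the lower bound in Proposition \ref{gassian-bound}, so $\mu$-null sets carry zero probability and the essential supremum over $\omega$ coincides with the essential supremum over $y\in M$. The fixed-time issue is resolved by observing that $(s,y)\mapsto P_sw(y)\,(P_sw^{-1/(p-1)}(y))^{p-1}$ is continuous on $(0,\infty)\times M$ by the smoothing and continuity properties of the heat semigroup; a continuous function bounded by $\|w\|_{A_p^{heat}(M)}$ off a $(\mu\times\L^1)$-null set is then bounded by it at every point of $(0,\infty)\times M$, so the genuine supremum over $t$ does not exceed the essential supremum. (The endpoint $t=T$ contributes $w(Z_T)\,w(Z_T)^{-1}=1\le\|w\|_{A_p^{heat}(M)}$, since the constant is always at least $1$.) This reproduces the argument of \cite[Lemma 5.2]{BO2016}, from which the statement is borrowed.
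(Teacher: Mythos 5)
Your proposal is correct and follows essentially the same route as the proof the paper defers to: the paper omits the argument entirely, citing \cite[Lemma 5.2]{BO2016}, and that argument is precisely your two applications of the Markov property identifying $\big(\E^x[(Y_t/Y_T)^{1/(p-1)}|\mathcal{F}_t]\big)^{p-1}$ with $P_{T-t}w(Z_t)\big(P_{T-t}(w^{-1/(p-1)})(Z_t)\big)^{p-1}$, followed by the definition of $A_p^{heat}(M)$. Your treatment of the $(\mu\times\L^1)$-essential-supremum versus fixed-time issue is a genuine and welcome addition in the metric-measure setting (the paper and \cite{BO2016} pass over it); just note that the continuity of $(s,y)\mapsto P_sw(y)$ you invoke is not automatic for a merely locally integrable $w$ and should be justified by the Gaussian bounds of Proposition \ref{gassian-bound} together with dominated convergence, using that finiteness of $P_{s_0}w(y_0)$ at one point dominates $p_s(y,\cdot)$ locally uniformly for $s$ suitably smaller than $s_0$.
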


Now we are ready to prove Theorem \ref{main}. The basic idea of proof is not new. It comes from \cite[Section 5]{BO2016} for Cauchy semigroups in the Euclidean setting.
\begin{proof}[Proof of Theorem \ref{main}]
(1) Fix $T>0$. Let $f\in C_c(M)$ and $K=\supp(f)$. Then,  by the stochastic completeness and the symmetry of the heat kernel, we have
\begin{eqnarray*}
\int_M |f(x)|^2w(x)\,d\mu(x)&=&\int_M\int_M |f(y)|^2w(y)p_T(y,x)\,\d\mu(x)\d\mu(y)\\
&=&\int_M\int_M \mathbf{1}_K(y)|f(y)|^2w(y)p_T(y,x)\,\d\mu(x)\d\mu(y)\\
&=&\int_M\Big(\int_M \mathbf{1}_K(y)|f(y)|^2w(y)p_T(x,y)\,\d\mu(y)\Big)\d\mu(x)\\
&=&\int_M \mathbb{E}_x\Big[\mathbf{1}_K(Z_T)|f(Z_T)|^2w(Z_T)\Big]\,\d\mu(x)\\
&\leq&2\int_M \mathbb{E}_x\Big[\mathbf{1}_K(Z_T)|f(Z_T)-P_Tf(x)|^2w(Z_T)\Big]\,\d\mu(x)\\
  &&+  2\int_M \mathbb{E}_x\Big[\mathbf{1}_K(Z_T)|P_Tf(x)|^2w(Z_T)\Big]\,\d\mu(x)\\
&=&: {\rm I}+ {\rm II}.
\end{eqnarray*}
Let $Y_t=P_{T-t}w(Z_t)$ for $0\leq t\leq T$. Then $Y_T=w(Z_T)$. Define $\d\mathbb{Q}=Y_T\d\P$. Applying \eqref{L2-1} and Lemma \ref{martingale}, we derive that
\begin{eqnarray*}
&&\mathbb{E}_x\big[|f(Z_T)-P_Tf(x)|^2w(Z_T)\big]\\
&=&\mathbb{E}_x\big[\mathcal{M}(f)_T^2Y_T\big]=\|\mathcal{M}(f)_T\|^2_{L^2(\mathbb{Q})}\\
&\leq&80\|Y_T\|_{A_2^{mart}}\|\langle\mathcal{M}(f)\rangle^{1/2}_T\|^2_{L^2(\mathbb{Q})}\\
&=&160\|Y_T\|_{A_2^{mart}}\mathbb{E}_x\Big[w(Z_T)\int_0^T|\nabla P_{T-r}f|^2_w(Z_r)\,\d r \Big],
\end{eqnarray*}
Then, by the Markov property, we  obtain  that
\begin{eqnarray*}
{\rm I}&:=&\int_M\mathbb{E}_x\Big[w(Z_T)\int_0^T|\nabla P_{T-r}f|^2_w(Z_r)\,\d r \Big]\,\d\mu(x)\\
&=&\int_M\mathbb{E}_x\Big\{\mathbb{E}_x\Big[w(Z_T)\int_0^T|\nabla P_{T-r}f|^2_w(Z_r)\,\d r\Big|\mathcal{F}_T\Big]\Big\}\,\d\mu(x)\\
&=&\int_M\mathbb{E}_x\Big\{\mathbb{E}_x\Big[\int_0^T|\nabla P_{T-r}f|^2_w(Z_r)\,\d r\Big|\mathcal{F}_T\Big] w(Z_T)\Big\}\,\d\mu(x)\\
&=&\int_M\mathbb{E}_x\Big\{\mathbb{E}_{x}\Big[\int_0^T|\nabla P_{T-r}f|^2_w(Z_r)\,\d r\Big|Z_T\Big] w(Z_T)\Big\}\,\d\mu(x).
\end{eqnarray*}
Denote $$\varphi(Z_T)=\mathbb{E}_{x}\Big[\int_0^T|\nabla P_{T-r}f|^2_w(Z_r)\,\d r\Big|Z_T\Big].$$ Then
\begin{eqnarray*}
{\rm I}&=&\int_M\mathbb{E}_x\left[\varphi(Z_T) w(Z_T)\right]\,\d\mu(x)\\
&=&\int_M\Big(\int_M\varphi(y)w(y)p_T(x,y)\,\d\mu(y)\Big)\,\d\mu(x)\\
&=&\int_M\Big(\int_M\varphi(y)p_T(x,y)\,\d\mu(x)\Big)w(y)\,\d\mu(y)\\
&=&\int_M\Big(\int_M\mathbb{E}_{x}\Big[\int_0^T|\nabla P_{T-r}f|^2_w(Z_r)\,d r\Big|Z_T=y\Big]p_T(x,y)\,\d\mu(x)\Big)w(y)\,\d\mu(y)\\
&=&\int_M\mathcal{H}_{\ast,T}(f)(y)^2w(y)\,\d\mu(y),
\end{eqnarray*}
where we applied Lemma \ref{H_T} in the last line. Thus,
\begin{eqnarray*}
{\rm I}&\leq& 320\|Y_T\|_{A_2^{mart}}\int_M\mathcal{H}_{\ast,T}(f)(x)^2  w(x)\,\d\mu(x)\\
&\leq&320\|w\|_{A_2^{heat}(M)}\int_M\mathcal{H}_{\ast,T}(f)(x)^2  w(x)\,\d\mu(x),
\end{eqnarray*}
where we applied Lemma \ref{muck-heat} in the second inequality.

Applying the heat kernel upper bound \eqref{gassian-bound-1}, we immediately have
\begin{eqnarray*}
|P_Tf(x)|^2&\leq& P_T(|f|^2)(x)=\int_M p_T(x,y)|f(y)|^2\,\d\mu(y)\\
&\leq&\int_M \frac{C}{\mu(B(x,\sqrt{T}))}|f(y)|^2\,\d\mu(y)\\
&=&\frac{C}{\mu(B(x,\sqrt{T}))}\|f\|^2_{L^2(M,\mu)},
\end{eqnarray*}
where $C$ is a positive constant. By the assumption \eqref{assumption-main}, we have
\begin{eqnarray}\label{lower-vol}
\mu(B(x,r))\geq \theta r^N,\quad\mbox{for any }x\in M,\, r>0,
\end{eqnarray}
where $\theta:=\limsup_{r\rightarrow\infty}[\mu(B(o,r))/r^N]>0$. Indeed, by the volume comparison property in Proposition \ref{vol-comparison}, for any $x\in M$ and $r>0$,
\begin{eqnarray*}
\frac{\mu(B(x,r))}{r^N}&\geq&\limsup_{R\rightarrow\infty}\frac{\mu(B(x,R))}{R^N}\\
&\geq&\limsup_{R\rightarrow\infty}\frac{\mu(B(o,R-d(o,x)))}{R^N}\\
&=&\limsup_{R\rightarrow\infty}\Big(\frac{\mu(B(o,R-d(o,x)))}{(R-d(o,x))^N}\cdot\frac{(R-d(o,x))^N}{R^N}\Big)\\
&\geq&\theta.
\end{eqnarray*}
Hence,
\begin{eqnarray*}
{\rm II}&=&2\int_M |P_Tf(x)|^2\mathbb{E}_x[\mathbf{1}_K(Z_T)w(Z_T)]\,\d\mu(x)\\
&\leq&C\|f\|^2_{L^2(M,\mu)}\int_M\frac{1}{\mu(B(x,\sqrt{T}))}\Big(\int_Kw(y)p_T(x,y)\,\d\mu(y)\Big)\d\mu(x)\\
&\leq&\frac{C}{\theta}\|f\|^2_{L^2(M,\mu)}T^{-N/2}\int_M\int_K w(y)p_T(x,y)\,\d\mu(y)\d\mu(x)\\
&=&\frac{C}{\theta}\|f\|^2_{L^2(M,\mu)}T^{-N/2}\int_K w(y)\,\d\mu(y),
\end{eqnarray*}
where we used the symmetry of the heat kernel and the stochastic completeness again in the last line. Obviously, the term in the last line tends to 0 as $T\rightarrow\infty$.

Therefore, combining the estimates on ${\rm I}$ and ${\rm II}$, we arrive at
\begin{eqnarray*}
\int_M |f(x)|^2w(x)\,d\mu(x)&\leq&320\|w\|_{A_2^{heat}(M)}\int_M\mathcal{H}_{\ast,T}(f)(x)^2  w(x)\,\d\mu(x)\\
&&+\frac{C}{\theta}\|f\|^2_{L^2(M,\mu)}T^{-N/2}\int_K w(y)\,\d\mu(y).
\end{eqnarray*}
Letting $T\rightarrow\infty$, by the monotone convergence theorem, we prove \eqref{main-1}.

(2) Similar as the argument above, we have
\begin{eqnarray*}
&&\int_M \mathcal{H}_{\ast,T}(f)^2(y)w(y)\,d\mu(y)\\
&=&\int_M\mathbb{E}_x\Big[w(Z_T)\int_0^T|\nabla P_{T-r}f|^2_w(Z_r)\,\d r\Big]\,\d\mu(x)\\
&\leq&\frac{1}{2}\int_M \E_x\Big[\Big(|P_Tf(x)|^2+2\int_0^T|\nabla P_{T-r}f|^2_w(Z_r)\,\d r\Big)w(Z_T)\Big]\,\d\mu(x)\\
&=&\frac{1}{2}\int_M \E_x\big[\langle\mathcal{N}(f)\rangle_Tw(Z_T)\big]\,\d\mu(x).
\end{eqnarray*}
Then, applying \eqref{L2-2}, we obtain
\begin{eqnarray*}
&&\int_M \mathcal{H}_{\ast,T}(f)^2(y)w(y)\,\d\mu(y)\\
&\leq& \frac{1}{2}\int_M \inf_{1<s<2}\Big(\frac{s}{2-s}\|Y_T\|_{A_s^{mart}}\Big)\E_x\big[\mathcal{N}(f)_T^2w(Z_T)\big]\,\d\mu(x)\\
&=&\frac{1}{2}\int_M \inf_{1<s<2}\Big(\frac{s}{2-s}\|Y_T\|_{A_s^{mart}}\Big)\E_x\big[|f(Z_T)|^2w(Z_T)\big]\,\d\mu(x)\\
&\leq&\frac{1}{2}\inf_{1<s<2}\Big(\frac{s}{2-s}\|w\|_{A_s^{heat}(M)}\Big) \int_M |f(x)|^2w(x)\,\d\mu(x),
\end{eqnarray*}
where we used Lemma \ref{muck-heat} in the last inequality.  Thus, we prove \eqref{main-2}.

(3) By the same approach as above, applying \eqref{L2-3} instead of \eqref{L2-2}, we obtain \eqref{main-3}.

(4) By the inequality (see e.g. \cite{AmbrosioGigliSavare2012})
$$|\nabla P_tf|_w^2\leq P_t|\nabla f|_w^2,\quad\mbox{for every }f\in W^{1,2}(M),$$
we deduce that
\begin{eqnarray*}
\mathcal{H}(f)^2(x)&=&\int_0^\infty |\nabla P_tf|_w^2(x)\,\d t\\
&\leq&\int_0^\infty P_{t/2}\big(|\nabla P_{t/2}f|_w^2\big)(x)\,\d t\\
&=&\int_0^\infty\int_M |\nabla P_{t/2}f|^2_w(y)p_{t/2}(x,y)\,\d\mu(y)\d t\\
&=&2\int_0^\infty\int_M |\nabla P_{t}f|^2_w(y)p_{t}(x,y)\,\d\mu(y)\d t\\
&=&2\mathcal{H}_\ast(f)^2(x).
\end{eqnarray*}
Thus, combining this and \eqref{main-3}, we obtain \eqref{main-4} immediately.

Therefore, the proof is completed.
\end{proof}
\begin{remark}\label{remark-of-main} The $\RCD^*(0,N)$ space with $N\in [1,\infty)$ turns out to be a convenient setting to establish Theorem \ref{main}. However, it seems that the method also works in more general situations.

(1) In the above proof, we do not use the full upper bound of the heat kernel in \eqref{gassian-bound-1}; indeed, the estimate
$$p_t(x,y)\leq \frac{c}{\mu\big(B(x,\sqrt{t})\big)}{},\quad\mbox{ for any }x,y\in M\mbox{ and }t>0,$$
 is enough, where $c$ is a positive constant.

(2) In fact, for Theorem \ref{main} to hold, it is enough to assume that, there exist some constants $C>0$ and $\delta>0$ such that
\begin{eqnarray}\label{lower-vol-growth}\mu\big(B(x_0,r)\big)\geq Cr^\delta,\quad\mbox{ for some }x_0\in M\mbox{ and for any big }r>0,\end{eqnarray}
in stead of the maximum volume growth assumption \eqref{assumption-main}. It is clear that \eqref{lower-vol-growth} does not depend on the choice of $x_0$, since from Proposition \ref{vol-comparison}, for any $x\in M$,
$$\mu\big(B(x_0,r)\big)\leq\mu\big(B(x,d(x_0,x)+r)\big)\leq\Big(\frac{r+d(x_0,x)}{r}\Big)^N\mu\big(B(x,r)\big)\leq2^N\mu\big(B(x,r)\big),$$
for any $r\geq d(x_0,x)$. Moreover, every noncompact $\CD^\ast(0,N)$ space (in particular, $\RCD^\ast(0,N)$ space) $(M,d,\mu)$ has at least linear growth of the $\mu$-measure of the ball, which is implied by Lemma \ref{low-vol-bound} below.
\end{remark}

\begin{lemma}\label{low-vol-bound}
Let $(M,d,\mu)$ be a noncompact $\CD^\ast(0,N)$ space with $N\in[1,\infty)$. Then $\mu(M)=\infty$, and moreover, for every $o\in M$, there exists a positive constant $C$ such that
\begin{eqnarray}\label{low-vol-bound-1}
\liminf_{r\rightarrow\infty}\frac{\mu\big(B(o,r)\big)}{r}>C.
\end{eqnarray}
\end{lemma}
\begin{proof}
Fix an arbitrary $r>0$. For any $t>r$ and any $y\in M$, by the volume comparison property in Proposition \ref{vol-comparison},
\begin{eqnarray*}
&&\frac{\mu\big(B(y,t)\big)}{t^N}-\frac{\mu\big(B(y,t)\big)-\mu\big(B(y,r)\big)}{t^N-r^N}\\
&=&\frac{r^N}{t^N-r^N}\Big[\frac{\mu\big(B(y,r)\big)}{r^N}-\frac{\mu\big(B(y,t)\big)}{t^N}\Big]\geq0.
\end{eqnarray*}
Let $x\in \partial B(o,t)$ (the boundary of the ball $B(o,t)$). The triangular inequality implies that $B(o,r)\subset B(x,t+r)\setminus B(x,t-r)$ and $B(x,t-r)\subset B(o,2t)$. Hence,
\begin{eqnarray*}
&&\mu\big(B(o,2t)\big)\geq\mu\big(B(x,t-r)\big)\\
&\geq& \frac{(t-r)^N}{(t+r)^N-(t-r)^N}\Big[\mu\big(B(x,t+r)\big) - \mu\big(B(x,t-r)\big)\Big]\\
&\geq&\frac{(t-r)^N}{(t+r)^N-(t-r)^N}\,\mu\big(B(o,r)\big).
\end{eqnarray*}
Let $\tilde{N}=\lfloor N\rfloor+1$, where $\lfloor N\rfloor$ is the integer part of $N$. For any $t\geq 2r$, since
\begin{eqnarray*}
&&\frac{(t-r)^N}{(t+r)^N-(t-r)^N}=\frac{1}{\big(1+\frac{2r}{t-r}\big)^N-1}\geq \frac{1}{\big(1+\frac{2r}{t-r}\big)^{\tilde{N}}-1}\\
&=&\frac{1}{\sum\limits_{k=1}^{\tilde{N}}\tbinom{\tilde{N}}{k}\big(\frac{2r}{t-r}\big)^k}\geq \frac{t-r}{r}\cdot\frac{1}{\sum\limits_{k=1}^{\tilde{N}}\tbinom{\tilde{N}}{k}2^k}
\geq\frac{t-t/2}{r}\cdot\frac{1}{\sum\limits_{k=1}^{\tilde{N}}\tbinom{\tilde{N}}{k}2^k}\\
&=&\frac{t}{2r\sum\limits_{k=1}^{\tilde{N}}\tbinom{\tilde{N}}{k}2^k}=\frac{t}{2(3^{\tilde{N}}-1)r}=ct,
\end{eqnarray*}
where $\frac{1}{c}=2(3^{\tilde{N}}-1)r$. Thus,
$$\mu\big(B(o,2t)\big)\geq c\mu\big(B(o,r)\big)t,\quad\mbox{for any }t\geq 2r.$$
It combined with Remark \ref{remark-of-main} (2) implies \eqref{low-vol-bound-1}, and hence $\mu(M)=\infty$ obviously. The proof is completed.
\end{proof}
\begin{remark}\label{remark-of-lemma}
Lemma \ref{low-vol-bound} is a generalization of the result obtained separately by E. Calabi in \cite{Calabi1975}  and S.T. Yau  in \cite{Yau1975} on complete and noncompact Riemannian manifolds with nonnegative Ricci curvature. The proof of Lemma \ref{low-vol-bound} is elementary and  only depends on the volume comparison property (see Proposition \ref{vol-comparison} above). Hence, it seems that property \eqref{low-vol-bound-1} also holds in the more general setting, i.e., the noncompact metric measure space satisfying the so-called measure contraction property $\MCP(0,N)$ with $N\in [1,\infty)$ (see e.g. \cite[Definition 2.1 and Theorem 5.1]{Ohta2007} for the definition and the volume comparison property, as well as \cite[Section 5]{Sturm2006b}).
\end{remark}

From Remark \ref{remark-of-main} (2) and Lemma \ref{low-vol-bound}, we immediately obtain the following proposition.
\begin{proposition}\label{noncom-rcd-main}
Let $(M,d,\mu)$ be a noncompact $\RCD^\ast(0,N)$ space with $N\in[1,\infty)$. Then all the inequalities \eqref{main-1}--\eqref{main-4} hold.
\end{proposition}

\medskip

Now we turn to  study the  Lusin area function in the metric measure space setting. Let $\alpha>0$ and $x\in M$. Define
$$\mathcal{C}_\alpha(x)=\{(y,t)\in M\times\R_+: d(y,x)<\alpha\sqrt{t}\},$$
which is the so-called parabolic cone with vertex at $x$ and aperture $\alpha$.
For every $f\in C_c(M)$, define the Lusin area function as
$$\mathcal{A}_\alpha(f)(x)=\Big(\int_{\mathcal{C}_\alpha(x)}t^{-N/2}|\nabla P_tf|_w^2(y)\,\d\mu(y)\d t\Big)^{1/2}.$$
\begin{corollary}\label{weighted-area}
Let $(M,d,\mu)$ be an $\RCD^\ast(0,N)$ space with $N\in[1,\infty)$ and $w\in A_2^{heat}(M)$.
Suppose that \eqref{assumption-main} (or \eqref{lower-vol-growth}) holds  and
\begin{equation}\label{assumption-area}
\liminf_{r\rightarrow0}\frac{\mu(B(x,r))}{r^N}=\kappa,
\end{equation}
for every $x\in M$ and  some constant $\kappa>0$. Then, for every $f\in C_c(M)$,
$$\|\mathcal{A}_\alpha(f)\|_{L^2_w(M,\mu)}\leq \Big(\frac{\kappa}{C}\Big)^{1/2}e^{\alpha^2/6}\|w\|_{A_2^{heat}(M)}\|f\|_{L^2_w(M,\mu)},$$
for some constant $C>0$ depending on $N$.
\end{corollary}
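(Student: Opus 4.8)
The plan is to dominate the Lusin area function pointwise by the $\mathcal{H}_\ast$-function and then invoke the already established weighted bound \eqref{main-3}. Fixing $x\in M$ and slicing the parabolic cone $\mathcal{C}_\alpha(x)$ at each height $t$ into the ball $B(x,\alpha\sqrt t)$, I would write
$$\mathcal{A}_\alpha(f)^2(x)=\int_0^\infty\int_{B(x,\alpha\sqrt t)}t^{-N/2}|\nabla P_tf|_w^2(y)\,\d\mu(y)\d t.$$
The goal is to replace, on the cone, the factor $t^{-N/2}$ by a constant multiple of the heat kernel $p_t(x,y)$, after which the sliced integral is controlled by the full integral defining $\mathcal{H}_\ast(f)^2(x)$.

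First I would exploit the Gaussian lower bound. On $\mathcal{C}_\alpha(x)$ one has $d(x,y)<\alpha\sqrt t$, so $d^2(x,y)/(3t)<\alpha^2/3$, and the lower estimate in \eqref{gassian-bound-1} gives
$$p_t(x,y)\geq\frac{1}{C\mu(B(x,\sqrt t))}\exp\Big\{-\frac{d^2(x,y)}{3t}\Big\}\geq\frac{e^{-\alpha^2/3}}{C\mu(B(x,\sqrt t))},$$
where $C$ is the dimensional constant of Proposition \ref{gassian-bound}; equivalently $\mu(B(x,\sqrt t))^{-1}\leq Ce^{\alpha^2/3}p_t(x,y)$ on the cone.

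Next I would convert the volume factor into the power $t^{-N/2}$. By the volume comparison property (Proposition \ref{vol-comparison}) the map $r\mapsto\mu(B(x,r))/r^N$ is non-increasing, so its value at any $r>0$ is at most its limit as $r\downarrow0$; the hypothesis \eqref{assumption-area} identifies this limit with $\kappa$ uniformly in $x$, whence $\mu(B(x,r))\leq\kappa r^N$ for every $x\in M$ and $r>0$. Taking $r=\sqrt t$ yields $t^{-N/2}\leq\kappa\,\mu(B(x,\sqrt t))^{-1}$, and combining with the previous bound I obtain, for $(y,t)\in\mathcal{C}_\alpha(x)$,
$$t^{-N/2}\leq \kappa Ce^{\alpha^2/3}\,p_t(x,y).$$
Inserting this into the sliced integral and then extending the inner $y$-integration from $B(x,\alpha\sqrt t)$ to all of $M$ (the integrand being non-negative) gives the pointwise comparison
$$\mathcal{A}_\alpha(f)^2(x)\leq \kappa Ce^{\alpha^2/3}\int_0^\infty\int_M|\nabla P_tf|_w^2(y)p_t(x,y)\,\d\mu(y)\d t=\kappa Ce^{\alpha^2/3}\,\mathcal{H}_\ast(f)^2(x).$$
Taking square roots, integrating against $w\,\d\mu$, and applying \eqref{main-3} then yields the assertion, with the numerical factor $2^{5/4}$ and the dimensional constant from \eqref{gassian-bound-1} absorbed into the single constant $C$ (depending on $N$) displayed in the statement.

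Since every step above is a pointwise estimate followed by a single application of an inequality already proved, there is no serious analytic obstacle. The only point requiring genuine care is the derivation of the uniform upper volume bound $\mu(B(x,r))\leq\kappa r^N$ from the $\liminf$ hypothesis \eqref{assumption-area}: one must use that the $\liminf$ as $r\downarrow0$ of the non-increasing quantity $\mu(B(x,r))/r^N$ coincides with its supremum, and that \eqref{assumption-area} supplies the same value $\kappa$ at every $x\in M$, so that the resulting comparison constant is independent of $x$. The remainder is purely the bookkeeping of constants.
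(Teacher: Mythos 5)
Your proposal is correct and takes essentially the same route as the paper's proof: the Gaussian lower bound of Proposition \ref{gassian-bound} combined with the uniform volume bound $\mu(B(x,r))\leq\kappa r^N$ (obtained from the Bishop--Gromov monotonicity of $r\mapsto\mu(B(x,r))/r^N$ and \eqref{assumption-area}) yields $t^{-N/2}\leq C\kappa e^{\alpha^2/3}p_t(x,y)$ on the cone, hence the pointwise domination $\mathcal{A}_\alpha(f)\leq C'\kappa^{1/2} e^{\alpha^2/6}\mathcal{H}_\ast(f)$, and \eqref{main-3} finishes. A minor point in your favor: you correctly state that the volume ratio is non-increasing, whereas the paper's proof misstates it as ``non-decreasing'' while nonetheless using the correct monotonicity.
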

\begin{proof}
From Proposition \ref{vol-comparison}, we know that the function $r\mapsto\mu(B(x,r))/r^N$ is non-decreasing in $(0,\infty)$ for every $x\in M$.
Hence, the assumption \eqref{assumption-area} implies that
$$\frac{\mu\big(B(x,R)\big)}{R^N}\leq\liminf_{r\rightarrow0}\frac{\mu\big(B(x,r)\big)}{r^N}= \kappa,$$
for any $x\in M$ and all $R>0$.
Applying the heat kernel lower bound in \eqref{gassian-bound-1}, we derive
\begin{eqnarray*}
p_t(x,y)&\geq& \frac{C}{\mu(B(x,\sqrt{t}))}\exp\Big(-\frac{d^2(x,y)}{3t}\Big)\\
&\geq&\frac{C}{\kappa t^{N/2}}\exp\Big(-\frac{d^2(x,y)}{3t}\Big),
\end{eqnarray*}
for any $y\in M$. Then
$$t^{-N/2}\leq \frac{\kappa}{C}\exp\Big(\frac{d^2(x,y)}{3t}\Big)p_t(x,y).$$
Hence,
\begin{eqnarray*}
\mathcal{A}_\alpha(f)(x)&\leq&\Big(\int_{\mathcal{C}_\alpha(x)}\frac{\kappa}{C}e^{\alpha^2/3}|\nabla P_tf|_w^2(y)p_t(x,y)\,\d\mu(y)\d t\Big)^{1/2}\\
&\leq&\Big(\frac{\kappa}{C}\Big)^{1/2}e^{\alpha^2/6}\mathcal{H}_\ast(f)(x).
\end{eqnarray*}
Thus, combining this with \eqref{main-3}, we  complete the proof.
\end{proof}
Clearly, in the noncompact setting, the inequality in Corollary \ref{weighted-area} also holds without assumption \eqref{assumption-main} (or \eqref{lower-vol-growth}), due to Proposition \ref{noncom-rcd-main}.

\section{The comparison of $p$-heat weight and $p$-Muckenhoupt weight}
Now let us recall the definition of the $p$-Muckenhoupt weight in the setting of metric measure spaces.
\begin{definition}\label{Muckenhoupt-weight}
Let $(M,d,\mu)$ be a metric measure space and $w: M\rightarrow [0,\infty]$ be a locally integrable function. For $p\in(1,\infty)$, we say that $w$ is a $p$-Muckenhoupt weight, denoted by $w\in A_p(M)$, if
$$\|w\|_{A_p(M)}:=\sup_B\Big(\frac{1}{\mu(B)}\int_Bw\,\d\mu\Big)\Big(\frac{1}{\mu(B)}\int_Bw^{-1/(p-1)}\,\d\mu\Big)^{p-1}<\infty,$$
where the supremum is taken over all balls $B\subset M$.
\end{definition}

It is an interesting question to ask about  the relationship between the $p$-heat  weight and  the $p$-Muckenhoupt weight in our metric measure space  setting.  The next theorem shows that  the $p$-heat  weight and the $p$-Muckenhoupt weight are comparable for each $1<p<\infty$. So, it is natural to regard the $p$-heat weight as a probabilistic representative of the $p$-Muckenhoupt weight. The same conclusion \eqref{comparison-of-weights-0} below for $p=2$ in $\R^2$ was obtained by Petermichl and Volberg in \cite[Theorem 3.1]{PV2002}.
\begin{theorem}\label{comparison-of-weights}
Let $(M,d,\mu)$ be an $\RCD^\ast(0,N)$ space with $N\in[1,\infty)$ and let $1<p<\infty$. Then, there exist positive constants $c_1$ and $c_2$ depending on $N$ such that
\begin{equation}\label{comparison-of-weights-0}
c_1\|w\|_{A_p^{heat}(M)}\leq\|w\|_{A_p(M)}\leq c_2\|w\|_{A_p^{heat}(M)}.
\end{equation}
\end{theorem}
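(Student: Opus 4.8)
The plan is to prove the two inequalities in \eqref{comparison-of-weights-0} separately, the tools being the Gaussian heat kernel bounds \eqref{gassian-bound-1} and the volume comparison of Proposition~\ref{vol-comparison}. Throughout I write $v=w^{-1/(p-1)}$ and abbreviate the ball average $\langle g\rangle_B=\frac{1}{\mu(B)}\int_B g\,\d\mu$, recalling that $\langle w\rangle_B\langle v\rangle_B^{p-1}\le\|w\|_{A_p(M)}$ for every ball $B$. It is also worth noting the exact duality identities $\|w\|_{A_p^{heat}(M)}=\|v\|_{A_{p'}^{heat}(M)}^{p-1}$ and $\|w\|_{A_p(M)}=\|v\|_{A_{p'}(M)}^{p-1}$, where $1/p+1/p'=1$; since raising a two-sided inequality to the positive power $p-1$ preserves it, these allow one to reduce to the range $p\ge2$. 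The direct argument below in fact handles all $p\in(1,\infty)$, so the duality is optional.

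For the right-hand inequality $\|w\|_{A_p(M)}\le c_2\|w\|_{A_p^{heat}(M)}$ I would fix a ball $B=B(x_0,r)$ and test the heat weight at time $t=r^2$, so that $B(x_0,\sqrt t)=B$. The lower bound in \eqref{gassian-bound-1} gives $p_t(x_0,y)\ge \frac{e^{-1/3}}{C\mu(B)}$ for $y\in B$, whence $P_tw(x_0)\ge \frac{e^{-1/3}}{C}\langle w\rangle_B$ and, identically, $P_tv(x_0)\ge \frac{e^{-1/3}}{C}\langle v\rangle_B$. Multiplying yields $\|w\|_{A_p^{heat}(M)}\ge P_tw(x_0)(P_tv(x_0))^{p-1}\ge (e^{-1/3}/C)^p\,\langle w\rangle_B\langle v\rangle_B^{p-1}$, and taking the supremum over all balls gives the claim with $c_2=(Ce^{1/3})^p$.

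The left-hand inequality $c_1\|w\|_{A_p^{heat}(M)}\le\|w\|_{A_p(M)}$ is the main obstacle: the naive estimate produces a power of $\|w\|_{A_p(M)}$ instead of the asserted \emph{linear} dependence. I would fix $x\in M$ and $t>0$, set $B_k=B(x,2^k\sqrt t)$, and decompose $M$ into the dyadic annuli $A_0=B_0$ and $A_k=B_k\setminus B_{k-1}$. On $A_k$ the upper bound in \eqref{gassian-bound-1} together with the volume comparison $\mu(B_k)\le 2^{kN}\mu(B_0)$ gives $\int_{A_k}p_t(x,y)w(y)\,\d\mu(y)\le C\gamma_k 2^{kN}\langle w\rangle_{B_k}$, where $\gamma_k=e^{-4^{k-1}/5}$ for $k\ge1$ and $\gamma_0=1$; summing, $P_tw(x)\le C\sum_k\gamma_k 2^{kN}\langle w\rangle_{B_k}$, and likewise for $P_tv(x)$. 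The decisive manoeuvre is to dispose of the $(p-1)$-th power \emph{before} invoking the $A_p$ condition: by Jensen's inequality when $p\ge2$ and by subadditivity of $s\mapsto s^{p-1}$ when $1<p\le2$, one bounds $(P_tv(x))^{p-1}$ by a single convergent sum of the terms $\langle v\rangle_{B_j}^{p-1}$ with super-exponentially decaying coefficients. This reduces $P_tw(x)(P_tv(x))^{p-1}$ to a double series in the mixed products $\langle w\rangle_{B_k}\langle v\rangle_{B_j}^{p-1}$.

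The final ingredient is an elementary nested-average lemma: applying the $A_p$ condition on the larger of $B_j,B_k$ and monotonicity of the integral on the smaller one obtains $\langle w\rangle_{B_k}\langle v\rangle_{B_j}^{p-1}\le \|w\|_{A_p(M)}\,2^{|k-j|N\max\{1,p-1\}}$, which is linear in $\|w\|_{A_p(M)}$ — this is precisely what keeps the overall constant from picking up a power. Substituting it, $P_tw(x)(P_tv(x))^{p-1}\le C^p\|w\|_{A_p(M)}\sum_{k,j}\gamma_k\,\gamma_j^{\ast}\,2^{(k+j)N}2^{|k-j|N\max\{1,p-1\}}$, where $\gamma_j^{\ast}$ denotes the coefficient coming from the power-splitting step; the double sum converges to a constant depending only on $N$ and $p$, because the super-exponential decay of the $\gamma$'s dominates every exponential factor in $k,j$. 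Taking the supremum over $x$ and $t$ yields $\|w\|_{A_p^{heat}(M)}\le c_1^{-1}\|w\|_{A_p(M)}$. The resulting $c_1,c_2$ depend on $N$ and on $p$, and the entire decomposition remains valid when $M$ is compact, since then only finitely many annuli are nonempty.
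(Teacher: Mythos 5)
Your proposal is correct, and while your easy direction and the overall scaffolding (dyadic annuli $B_k=B(x,2^k\sqrt t)$, the Gaussian upper bound, Bishop--Gromov volume comparison) coincide with the paper's, your treatment of the hard direction $\|w\|_{A_p^{heat}(M)}\le c\,\|w\|_{A_p(M)}$ is genuinely different. The paper first proves the intermediate estimate
\begin{equation*}
\Big(\frac{1}{\mu(B_0)}\int_{B_0}w\,\d\mu\Big)\big(P_t(w^{-1/(p-1)})(x)\big)^{p-1}\le C\|w\|_{A_p(M)}
\end{equation*}
(its \eqref{comparison-of-weights-1}), and then bootstraps it across the dyadic time scales $2^{2k}t$: each ball average $\frac{1}{\mu(B_k)}\int_{B_k}w\,\d\mu$ in the annuli sum for $P_tw(x)$ is traded for $\big(P_{2^{2k}t}(w^{-1/(p-1)})(x)\big)^{-(p-1)}$, and the rescaled heat potentials are then compared back to $P_t(w^{-1/(p-1)})(x)$ using the Gaussian \emph{lower} bound together with the upper bound. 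You avoid this multi-scale bootstrap entirely: you split the $(p-1)$-th power across the annuli sum (Jensen for $p\ge2$, subadditivity of $s\mapsto s^{p-1}$ for $1<p<2$), reducing everything to the mixed products $\langle w\rangle_{B_k}\langle v\rangle_{B_j}^{p-1}$, and your nested-ball lemma --- $A_p$ on the larger ball, monotonicity of the integral plus volume comparison on the smaller --- bounds each product by $\|w\|_{A_p(M)}2^{|k-j|N\max\{1,p-1\}}$, after which the super-exponential Gaussian decay swallows the double sum. Both routes invoke the $A_p$ condition exactly once per term, which is what keeps the dependence on $\|w\|_{A_p(M)}$ linear; your route has the advantage of using the kernel lower bound only where it is genuinely needed (the easy inequality) and of making the linearity mechanism explicit, whereas the paper's write-up absorbs $\|w\|_{A_p(M)}$ into a running constant $C$, so one must re-trace its argument to confirm that no extra power is picked up. One small remark applying equally to both proofs: the constants depend on $p$ as well as on $N$ (in your case through the exponent $\max\{1,p-1\}$ and the Jensen step, in the paper's through the convergent sum $\sum_k 2^{(2p-1)kN}e^{-4^{k-1}/5}$), so the theorem's phrase ``depending on $N$'' should be read as ``depending on $N$ and $p$''.
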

\begin{proof}The constant $C$ below may vary from line to line. We assume that $w\in A_p(M)$ and then prove the first inequality in \eqref{comparison-of-weights-0}. For any $x\in M$ and $t>0$, let $B_{-1}=\emptyset$, $B_k=B(x,2^k\sqrt{t})$ and $C_k=B_k\setminus B_{k-1}$, $k=0,1,2,\cdots$. Then, by the heat kernel upper bound in \eqref{gassian-bound-1},
\begin{eqnarray*}
&&\Big(\frac{1}{\mu(B_0)}\int_{B_0}w\,\d\mu\Big)\big(P_t(w^{-1/(p-1)})(x)\big)^{p-1}\\
&=&\Big(\frac{1}{\mu(B_0)}\int_{B_0}w\,\d\mu\Big)\Big(\int_Mp_t(x,y)w^{-1/(p-1)}(y)\,\d\mu(y)\Big)^{p-1}\\
&\leq&\Big(\frac{1}{\mu(B_0)}\int_{B_0}w\,\d\mu\Big)\Big(\int_M\frac{C}{\mu(B_0)}\exp\Big[-\frac{d^2(x,y)}{5t}\Big]w^{-1/(p-1)}(y)\,\d\mu(y)\Big)^{p-1}\\
&\leq&\Big(\frac{1}{\mu(B_0)}\int_{B_0}w\,\d\mu\Big)\Big(\sum_{k=0}^\infty\int_{C_k}\frac{C}{\mu(B_0)}\exp\Big[-\frac{d^2(x,y)}{5t}\Big]w^{-1/(p-1)}(y)\,\d\mu(y)\Big)^{p-1}\\
&\leq&\Big(\frac{1}{\mu(B_0)}\int_{B_0}w\,\d\mu\Big)\Big(\frac{C}{\mu(B_0)}\sum_{k=0}^\infty\mu(B_k)\exp\Big(-\frac{2^{2k-2}}{5}\Big)\frac{1}{\mu(B_k)}
\int_{B_k}w^{-1/(p-1)}\,\d\mu\Big)^{p-1},
\end{eqnarray*}
where $C$ is a positive constant depending on $N$. Since
$$\Big(\frac{1}{\mu(B_k)}\int_{B_k}w\,\d\mu\Big)\Big(\frac{1}{\mu(B_k)}\int_{B_k}w^{-1/(p-1)}\,\d\mu\Big)^{p-1}\leq C,$$
for some constant $C>0$, we have
\begin{eqnarray*}
&&\Big(\frac{1}{\mu(B_0)}\int_{B_0}w\,\d\mu\Big)\big(P_t(w^{-1/(p-1)})(x)\big)^{p-1}\\
&\leq&\Big(\frac{1}{\mu(B_0)}\int_{B_0}w\,\d\mu\Big)\Big\{\frac{C}{\mu(B_0)}\sum_{k=0}^\infty\mu(B_k)\exp\Big(-\frac{2^{2k-2}}{5}\Big)\\
&&\times\Big(\frac{1}{\mu(B_k)}\int_{B_k}w\,\d\mu\Big)^{-1/(p-1)}\Big\}^{p-1},
\end{eqnarray*}
where, by the volume comparison property (see Proposition \ref{vol-comparison}),
\begin{eqnarray*}
\frac{1}{\mu(B_k)}\int_{B_k}w\,\d\mu&=&\frac{\mu(B_0)}{\mu(B_k)}\cdot\frac{1}{\mu(B_0)}\int_{B_k}w\,\d\mu\\
&\geq&2^{-kN}\frac{1}{\mu(B_0)}\int_{B_0}w\,\d\mu.
\end{eqnarray*}
Hence, by the volume comparison property in Proposition \ref{vol-comparison} again,
\begin{eqnarray*}
&&\Big(\frac{1}{\mu(B_0)}\int_{B_0}w\,\d\mu\Big)\big(P_t(w^{-1/(p-1)})(x)\big)^{p-1}\\
&\leq&\Big(\frac{1}{\mu(B_0)}\int_{B_0}w\,\d\mu\Big)\Big\{\frac{C}{\mu(B_0)}\sum_{k=0}^\infty\mu(B_k)\exp\Big(-\frac{2^{2k-2}}{5}\Big)\\
&&\times\Big(\frac{1}{\mu(B_0)}\int_{B_0}w\,\d\mu\Big)^{-1/(p-1)}2^{kN/(p-1)} \Big\}^{p-1}\\
&\leq&\Big\{\frac{C}{\mu(B_0)}\sum_{k=0}^\infty 2^{kN/(p-1)}\mu(B_k)\exp\Big(-\frac{2^{2k-2}}{5}\Big)\Big\}^{p-1}\\
&\leq&C\Big\{\sum_{k=0}^\infty2^{pkN/(p-1)}\exp\Big(-\frac{2^{2k-2}}{5}\Big)\Big\}^{p-1},
\end{eqnarray*}
which implies that
\begin{eqnarray}\label{comparison-of-weights-1}
\Big(\frac{1}{\mu(B_0)}\int_{B_0}w\,\d\mu\Big)\big(P_t(w^{-1/(p-1)})(x)\big)^{p-1}\leq C,
\end{eqnarray}
for any $x\in M$ and $t>0$, where $C$ is a positive constant depending on $N$. Then, applying the heat kernel upper bound in \eqref{gassian-bound-1} again, we derive that
\begin{eqnarray*}
&&P_tw(x)\big(P_t(w^{-1/(p-1)})(x)\big)^{p-1}\\
&=&\big(P_t(w^{-1/(p-1)})(x)\big)^{p-1}\int_Mp_t(x,y)w(y)\,\d\mu(y)\\
&\leq&\big(P_t(w^{-1/(p-1)})(x)\big)^{p-1}\sum_{k=0}^\infty\int_{C_k}p_t(x,y)w(y)\,\d\mu(y)\\
&\leq&\big(P_t(w^{-1/(p-1)})(x)\big)^{p-1}\sum_{k=0}^\infty\int_{B_k}\frac{C}{\mu(B_0)}\exp\Big(-\frac{2^{2k-2}}{5}\Big)w(y)\,\d\mu(y)\\
&=&\big(P_t(w^{-1/(p-1)})(x)\big)^{p-1}\sum_{k=0}^\infty\frac{C\mu(B_k)}{\mu(B_0)}\exp\Big(-\frac{2^{2k-2}}{5}\Big)
\Big(\frac{1}{\mu(B_k)}\int_{B_k}w(y)\,\d\mu(y)\Big)\\
&\leq&C\big(P_t(w^{-1/(p-1)})(x)\big)^{p-1}\sum_{k=0}^\infty 2^{kN}\exp\Big(-\frac{2^{2k-2}}{5}\Big)\big(P_{2^{2k}t}(w^{-1/(p-1)})(x)\big)^{-(p-1)},
\end{eqnarray*}
where we used the volume comparison property and \eqref{comparison-of-weights-1}. By the heat kernel lower and upper bounds \eqref{gassian-bound-1} and the volume comparison property,
\begin{eqnarray*}
P_{2^{2k}t}(w^{-1/(p-1)})(x)&=&\int_Mp_{2^{2k}t}(x,y)w^{-1/(p-1)}(y)\,\d\mu(y)\\
&\geq&\int_M \frac{C}{\mu(B_{2k})}\exp\Big(-\frac{d^2(x,y)}{3\cdot 2^{2k}t}\Big)w^{-1/(p-1)}(y)\,\d\mu(y)\\
&=&\int_M \frac{C}{\mu(B_0)}\Big(\frac{\mu(B_0)}{\mu(B_{2k})}\Big)\exp\Big(-\frac{d^2(x,y)}{3\cdot 2^{2k}t}\Big)w^{-1/(p-1)}(y)\,\d\mu(y)\\
&\geq&\int_M 2^{-2kN}\frac{C}{\mu(B_0)}\exp\Big(-\frac{d^2(x,y)}{3\cdot 2^{2k}t}\Big)w^{-1/(p-1)}(y)\,\d\mu(y)\\
&\geq&C2^{-2kN}P_t\big(w^{-1/(p-1)}\big)(x).
\end{eqnarray*}
Thus, for any $x\in M$ and any $t>0$,
\begin{eqnarray*}
&&P_tw(x)\big(P_t(w^{-1/(p-1)})(x)\big)^{p-1}\\
&\leq&C\big(P_t(w^{-1/(p-1)})(x)\big)^{p-1}\sum_{k=0}^\infty 2^{kN}\exp\Big(-\frac{2^{2k-2}}{5}\Big)\Big(\frac{2^{2kN}}{P_t\big(w^{-1/(p-1)}\big)(x)}\Big)^{p-1}\\
&=&C\sum_{k=0}^\infty 2^{(2p-1)kN}\exp\Big(-\frac{2^{2k-2}}{5}\Big)\leq C,
\end{eqnarray*}
which proves the first inequality of \eqref{comparison-of-weights-0}.

Now we assume that $w\in A_p^{heat}(M)$ and prove the second inequality in \eqref{comparison-of-weights-0}. Indeed, for any $t>0$ and $x\in M$, we can choose a constant $C>0$, depending on $N$, such that
$$\mu(B(x,\sqrt{t}))^{-1}\mathbf{1}_{B(x,\sqrt{t})}(\cdot)\leq C\mu(B(x,\sqrt{t}))^{-1}\exp[-d^2(x,\cdot)/(5t)]\leq C p_t(x,\cdot),$$
and hence,
\begin{eqnarray*}
&&\Big(\frac{1}{\mu(B(x,\sqrt{t}))}\int_{B(x,\sqrt{t})}w\,\d\mu\Big)\Big(\frac{1}{\mu(B(x,\sqrt{t}))}\int_{B(x,\sqrt{t})}w^{-1/(p-1)}\,\d\mu\Big)^{p-1}\\
&\leq&C\Big(\int_Mp_t(x,y)w(y)\,\d\mu(y)\Big)\Big(\int_Mp_t(x,y)w(y)^{-1/(p-1)}\,\d\mu(y)\Big)^{p-1}\\
&=&CP_tw(x)\big(P_t(w^{-1/(p-1)})(x)\big)^{p-1},
\end{eqnarray*}
which implies that the second inequality of \eqref{comparison-of-weights-0} holds for some positive constant $c_2$ depending on $N$.
\end{proof}

\begin{corollary}
Under the assumption of Corollary \ref{weighted-area}, if $w\in A_2(M)$, then for every $f\in C_c(M)$, there exists  a constant $C>0$ such that
\begin{equation*}
\|f\|_{L^2_w(M,\mu)}\leq C\|w\|_{A_2(M)}^{1/2}\|\mathcal{H}_\ast(f)\|_{L^2_w(M,\mu)},
\end{equation*}
\begin{equation*}
\|\mathcal{H}_\ast(f)\|_{L^2_w(M,\mu)}\leq C\inf_{1<s<2}\Big(\frac{s}{2-s}\|w\|_{A_s(M)}\Big)^{1/2}\|f\|_{L^2_w(M,\mu)},
\end{equation*}
\begin{equation*}
\|\mathcal{H}_\ast(f)\|_{L^2_w(M,\mu)}\leq C\|w\|_{A_2(M)}\|f\|_{L^2_w(M,\mu)},
\end{equation*}
\begin{equation*}
\|\mathcal{H}(f)\|_{L^2_w(M,\mu)}\leq C\|w\|_{A_2(M)}\|f\|_{L^2_w(M,\mu)},
\end{equation*}
$$\|\mathcal{A}_\alpha(f)\|_{L^2_w(M,\mu)}\leq Ce^{\alpha^2/6}\|w\|_{A_2(M)}\|f\|_{L^2_w(M,\mu)}.$$
\end{corollary}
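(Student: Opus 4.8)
The plan is to deduce each of the five displayed inequalities from the corresponding estimate already established in Theorem \ref{main} and Corollary \ref{weighted-area}, simply replacing the heat-weight norm $\|w\|_{A_p^{heat}(M)}$ by the Muckenhoupt norm $\|w\|_{A_p(M)}$ by means of the two-sided comparison \eqref{comparison-of-weights-0} of Theorem \ref{comparison-of-weights}. First I would isolate the one-sided consequence of \eqref{comparison-of-weights-0} that is actually needed, namely $\|w\|_{A_p^{heat}(M)}\leq c_1^{-1}\|w\|_{A_p(M)}$ for each $p\in(1,\infty)$. In particular, applying this with $p=2$ shows that $w\in A_2(M)$ forces $w\in A_2^{heat}(M)$, so that all the hypotheses on the heat weight required by Theorem \ref{main} and Corollary \ref{weighted-area} are indeed met.

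For the four inequalities depending on $w$ only through $\|w\|_{A_2^{heat}(M)}$ --- that is \eqref{main-1}, \eqref{main-3}, \eqref{main-4}, and the Lusin area estimate of Corollary \ref{weighted-area} --- the argument is then immediate: one substitutes $\|w\|_{A_2^{heat}(M)}\leq c_1^{-1}\|w\|_{A_2(M)}$ into the right-hand side and absorbs the factor $c_1^{-1/2}$ (when the heat norm enters to the power $1/2$, as in \eqref{main-1}) or $c_1^{-1}$ (when it enters to the first power, as in \eqref{main-3}, \eqref{main-4} and the area estimate), together with all $N$-dependent constants, into a single constant $C$. This produces the first, third, fourth, and fifth displayed inequalities of the corollary at once.

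The only inequality requiring some care is the second one, which carries the infimum $\inf_{1<s<2}$. Here I would pass the one-sided comparison through the infimum: for each $s\in(1,2)$ one has $\bigl(\tfrac{s}{2-s}\|w\|_{A_s^{heat}(M)}\bigr)^{1/2}\leq c_1(s)^{-1/2}\bigl(\tfrac{s}{2-s}\|w\|_{A_s(M)}\bigr)^{1/2}$, with the convention that both sides are $+\infty$ when $w\notin A_s(M)$. The one point that must be checked --- and the main (though mild) obstacle --- is that the comparison constant $c_1(s)$ can be chosen uniformly over the whole interval $(1,2)$, so that $c_1(s)^{-1/2}\leq C$ there. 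This follows from inspecting the proof of Theorem \ref{comparison-of-weights}: the constants arise from series of the form $\sum_k 2^{\beta(s)kN}\exp(-2^{2k-2}/5)$, with the exponent $\beta(s)$ (such as $s/(s-1)$ and $2s-1$) staying bounded for $s\in(1,2)$, whence these series are bounded uniformly in $s$. Granting this uniformity and taking the infimum over $s$ on the majorant yields $\inf_{1<s<2}\bigl(\tfrac{s}{2-s}\|w\|_{A_s^{heat}(M)}\bigr)^{1/2}\leq C\inf_{1<s<2}\bigl(\tfrac{s}{2-s}\|w\|_{A_s(M)}\bigr)^{1/2}$, which combined with \eqref{main-2} gives the second inequality. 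Collecting the five estimates completes the proof.
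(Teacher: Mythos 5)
Your overall strategy is the same as the paper's own (essentially one-line) proof: feed the comparison of Theorem \ref{comparison-of-weights} into Theorem \ref{main} and Corollary \ref{weighted-area}. For the first, third, fourth and fifth inequalities, which involve the weight only through $\|w\|_{A_2^{heat}(M)}$, your substitution $\|w\|_{A_2^{heat}(M)}\le c_1^{-1}\|w\|_{A_2(M)}$ is exactly what is needed and is correct. The gap is in your treatment of the second inequality, and it is precisely at the point you flagged as the ``main (though mild) obstacle''. You claim that the exponents in the series arising in the proof of Theorem \ref{comparison-of-weights}, ``such as $s/(s-1)$ and $2s-1$'', stay bounded for $s\in(1,2)$; this is false, since $s/(s-1)\to\infty$ as $s\to 1^+$. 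Correspondingly, the constant that the paper's proof produces is \emph{not} uniform on $(1,2)$: it contains the factor $\big(\sum_{k\ge0}2^{skN/(s-1)}\exp(-2^{2k-2}/5)\big)^{s-1}$, and retaining only the term $k\approx\log_4\frac{1}{s-1}$ shows that this factor is at least of order $(s-1)^{-N/4}$ as $s\to1^+$. Hence the pointwise bound $c_1(s)^{-1/2}\le C$ that you invoke before taking the infimum is unjustified (for the constants available from the paper it actually fails near $s=1$), and the second inequality does not follow as written.

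The conclusion is nevertheless true, and the repair is short: one never needs the comparison near $s=1$. By H\"{o}lder's inequality the Muckenhoupt norms are monotone, $\|w\|_{A_t(M)}\le\|w\|_{A_s(M)}$ for $1<s\le t$, and $\frac{s}{2-s}\ge1$ on $(1,2)$; hence for every $s\in(1,3/2]$,
\begin{equation*}
\inf_{3/2\le s'<2}\frac{s'}{2-s'}\|w\|_{A_{s'}(M)}\;\le\;\frac{3/2}{2-3/2}\,\|w\|_{A_{3/2}(M)}\;=\;3\|w\|_{A_{3/2}(M)}\;\le\;3\,\frac{s}{2-s}\,\|w\|_{A_s(M)},
\end{equation*}
and therefore
\begin{equation*}
\inf_{3/2\le s<2}\frac{s}{2-s}\|w\|_{A_s(M)}\;\le\;3\inf_{1<s<2}\frac{s}{2-s}\|w\|_{A_s(M)}
\end{equation*}
(with the usual convention that $\|w\|_{A_s(M)}=+\infty$ when $w\notin A_s(M)$, which affects nothing). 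On the range $s\in[3/2,2)$ the exponents $s/(s-1)\le3$ and $2s-1<3$ and the power $s-1\in[1/2,1)$ are all bounded, so the proof of Theorem \ref{comparison-of-weights} does give a uniform constant $C_N$ with $\|w\|_{A_s^{heat}(M)}\le C_N\|w\|_{A_s(M)}$ for all $s\in[3/2,2)$. Combining this with \eqref{main-2}, restricted to $s\in[3/2,2)$, and then with the displayed comparison of infima yields the second inequality of the corollary with $C=\tfrac{\sqrt2}{2}(3C_N)^{1/2}$. With this substitution your argument is complete and matches the paper's intended proof.
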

\begin{proof}
By Theorem \ref{main}, Corollary \ref{weighted-area} and Theorem \ref{comparison-of-weights}, we immediately prove the corollary.
\end{proof}

\section{Remarks}\hskip\parindent
In this section, we give some remarks to illustrate the main results  and to point out another interesting setting where the results should be established in a similar manner.

Let $M$ be a complete smooth Riemannian manifold, $d$ be the Riemannian distance, $\mu$ be the Riemannian volume measure and $\Ric$ be the Ricci curvature. Let $\Delta$ be the Laplace--Beltrami operator on $M$. The corresponding heat flow and heat kernel are still denoted by $(P_t)_{t\geq0}$ and $(p_t)_{t\geq0}$, respectively.  Then, applying the same approach as in Sections 3 and 4, we can prove the following theorem.
\begin{theorem}\label{Riem-manifold-eg}
Let $M$ be a complete and noncompact Riemannian manifold with dimension $n\geq2$. Suppose that $\Ric\geq0$ and $w\in A_2^{heat}(M)$. Then, all the inequalities in Theorem \ref{main} and Corollary \ref{weighted-area} hold true, as well as the result in Theorem \ref{comparison-of-weights}.
\end{theorem}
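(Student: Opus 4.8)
The plan is to reduce the statement to the abstract $\RCD$ results of the previous sections, by recognizing that a smooth manifold of the stated type is a canonical $\RCD^\ast(0,n)$ space. First I would recall the classical fact that a complete Riemannian manifold $(M,d,\mu)$ of dimension $n$ with $\Ric\geq0$ satisfies the curvature-dimension condition $\CD(0,n)=\CD^\ast(0,n)$ (Lott--Villani \cite{LV2009}, Sturm \cite{Sturm2006a,Sturm2006b}), while $W^{1,2}(M)$ is automatically a Hilbert space because the Cheeger energy $\D$ reduces to the smooth Dirichlet energy $\int_M|\nabla f|^2\,\d\mu$, whose associated norm obeys the parallelogram law; hence $(M,d,\mu)$ is an $\RCD^\ast(0,n)$ space (equivalently, by the Bochner/$\BE(0,n)$ characterization under infinitesimal Hilbertianity \cite{eks2013}). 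In the course of this identification one checks that the intrinsic heat flow $(P_t)$ coincides with the heat semigroup generated by the Laplace--Beltrami operator $\Delta$, that the heat kernel $(p_t)$ coincides with the smooth one, and that the minimal weak upper gradient $|\nabla f|_w$ equals the Riemannian gradient norm $|\nabla f|$. These matchings are exactly what make the operators $\mathcal{H}$, $\mathcal{H}_\ast$, $\mathcal{A}_\alpha$ and the weight classes $A_p^{heat}(M)$, $A_p(M)$ of Sections 3 and 4 reduce to their smooth Riemannian counterparts.

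With $M$ recognized as a \emph{noncompact} $\RCD^\ast(0,n)$ space with $n\geq2$, the four inequalities \eqref{main-1}--\eqref{main-4} follow at once from Corollary \ref{noncom-rcd-main}: noncompactness already supplies the linear volume lower bound of Proposition \ref{low-vol-bound}, which replaces the maximum volume growth assumption \eqref{assumption-main} (only the upper heat-kernel bound $p_t(x,y)\leq c/\mu(B(x,\sqrt t))$ is actually used). The weight comparison of Theorem \ref{comparison-of-weights} applies verbatim, yielding $c_1\|w\|_{A_p^{heat}(M)}\leq\|w\|_{A_p(M)}\leq c_2\|w\|_{A_p^{heat}(M)}$ for every $1<p<\infty$.

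It remains to verify the small-scale hypothesis \eqref{assumption-area} required by Corollary \ref{weighted-area}. Here I would use the smooth small-ball volume expansion $\mu(B(x,r))=\omega_n r^n\big(1+O(r^2)\big)$ as $r\to0$, where $\omega_n$ is the volume of the Euclidean unit ball. Choosing $N=n$ gives $\liminf_{r\to0}\mu(B(x,r))/r^n=\omega_n=:\kappa>0$ with the \emph{same} constant $\kappa$ at every point $x\in M$, so \eqref{assumption-area} holds; Corollary \ref{weighted-area} then delivers the weighted Lusin area estimate, completing the verification of all the asserted inequalities.

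The only genuinely delicate step is the first one: checking, once and for all, that the smooth objects (Laplace--Beltrami operator, Riemannian gradient, heat semigroup and kernel) coincide with their synthetic metric-measure counterparts, so that the abstract hypotheses are met exactly and the theorems specialize without modification. Once this dictionary is in place, the remainder is a direct transcription of the $\RCD$ arguments, using $\Ric\geq0$ only through the Gaussian heat-kernel bounds. The hypothesis $n\geq2$ serves to fix an integer dimension $N=n$ with the clean small-ball asymptotic above and allows one, if desired, to invoke the sharper heat-kernel bounds of \cite[Theorem 3.12]{LiH2017}; it is not otherwise essential to the argument.
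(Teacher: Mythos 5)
Your proposal is correct and follows essentially the same route as the paper: recognize $(M,d,\mu)$ with $\Ric\geq0$ as a noncompact $\RCD^\ast(0,n)$ space and then invoke Corollary \ref{noncom-rcd-main} together with Theorem \ref{comparison-of-weights}. In fact you are somewhat more thorough than the paper's two-line proof, which does not explicitly verify the small-ball hypothesis \eqref{assumption-area} needed for Corollary \ref{weighted-area}; your use of the expansion $\mu(B(x,r))=\omega_n r^n\big(1+O(r^2)\big)$, giving $\kappa=\omega_n$ uniformly in $x$, supplies exactly that check.
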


Note that the Riemannian manifold $(M,d,\mu)$ with $\Ric\geq0$ is a particular $\RCD^*(0,n)$ space. Hence, Theorem \ref{Riem-manifold-eg} is an immediate consequence of Proposition \ref{noncom-rcd-main}. We should mention that, the former part of  assertions in Theorem \ref{Riem-manifold-eg} is not new, which have been obtained recently in \cite[Theorem 6.1]{BO2016} without assuming the topological property that $M$ is noncompact but instead under an additional assumption on the heat kernel that
\begin{eqnarray}\label{large-kernel-asy}
\sup_{x\in M} p_t(x,x)=c_t\rightarrow0,\quad\mbox{as }t\rightarrow\infty.
\end{eqnarray}
However, from the heat kernel upper bound in \eqref{gassian-bound-1} and Lemma \ref{low-vol-bound}, we immediately see that \eqref{large-kernel-asy} holds under the assumption of Theorem \ref{Riem-manifold-eg}.

\medskip

Finally, we remark that results in Theorem \ref{main}, Proposition \ref{noncom-rcd-main}, Corollary \ref{weighted-area} and Theorem \ref{comparison-of-weights} should be similarly established on sub-Riemannian manifolds satisfying the generalized curvature-dimension condition  $\CD(0,\rho_2, \kappa, m)$ with $\rho_2>0$, $\kappa\geq0$ and $2\leq m<\infty$, in the sense of Baudoin--Garofalo \cite{BaudoinBonnefont2,BaudoinGarofalo2017}, although, generally speaking, in that setting, the curvature-dimension dimension condition in the sense of Lott--Sturm--Villani is not available (see e.g. \cite{Juillet}).

\subsection*{Acknowledgment}\hskip\parindent
The author wishes to thank the referee for his or her very careful reading and quite helpful corrections and suggestions of the manuscript.  The author would also like to acknowledge the financial support from the National Natural Science Foundation of China (Nos. 11401403, 11571347 and 11831014).

\end{document}